\newcommand{\mathsym}[1]{{}}
\newcommand{\unicode}[1]{{}}
\newcommand{\R}{\ensuremath{\mathbb{R}}}
\newtheorem {theorem} {Theorem}
\newtheorem {proposition}{Proposition}
\newtheorem {corollary}{Corollary}
\newtheorem {lemma}{Lemma}
\newtheorem {conjecture}{Conjecture}
\def\R{\mathbb R}
\begin{document}

\title[Generalized Chazy differential equation]
{Existence of a cylinder foliated by periodic orbits\\ in the generalized Chazy differential equation}

\author[J. Llibre, D. D. Novaes and C. Valls]
{Jaume Llibre$^1$, Douglas D. Novaes$^2$, and Claudia Valls$^3$}

\address{$^1$ Departament de Matem\`{a}tiques, Universitat Aut\`{o}noma de Barcelona, 08193 Bellaterra, Barcelona, Catalonia, Spain}
\email{jllibre@mat.uab.cat}

\address{$^2$ Departamento de Matem\'{a}tica, Instituto de Matem\'{a}tica, Estat\'{i}stica e Computa\c{c}\~{a}o Cient\'{i}fica (IMECC), Universidade Estadual de Campinas (UNICAMP), \ Rua S\'{e}rgio Buarque de Holanda, 651, Cidade Universit\'{a}ria Zeferino Vaz, 13083-859, Campinas, SP, Brazil}
\email{ddnovaes@unicamp.br}

\address{$^3$Departamento de Matem\'atica, Instituto Superior T\'ecnico, Universidade de Lisboa, Av. Rovisco Pais 1049--001, Lisboa, Portugal}
\email{cvalls@math.tecnico.ulisboa.pt}

\subjclass[2010]{34C23, 34C25, 34C45, 34A36}

\keywords{Generalized Chazy differential equation, discontinuous differential system, continuous differential system, analytic differential system, periodic orbit}

\maketitle

\begin{abstract}
The generalized Chazy differential equation corresponds to the following two-parameter family of differential equations
\begin{equation*}\label{gcdeq}
\dddot x+|x|^q \ddot x+\dfrac{k |x|^q}{x}\dot x^2=0,
\end{equation*}
which has its regularity varying with $q$ , a positive integer. Indeed, for $q=1$ it is discontinuous on the straight line $x=0$, whereas for $q$  a positive even integer it is polynomial, and for $q>1$ a positive odd integer it is continuous but not differentiable on the straight line $x=0$. In 1999, the existence of periodic solutions in the generalized Chazy differential equation was numerically observed for $q=2$ and $k=3$. In this paper, we prove analytically the existence of such periodic solutions. Our strategy allows to establish sufficient conditions  ensuring that the generalized Chazy differential equation, for $k=q+1$ and any positive integer $q$ , has actually an invariant topological cylinder foliated by periodic solutions in the $(x,\dot x,\ddot x)$-space . In order to set forth the bases of our approach, we start by considering $q=1,2,3$, which are representatives of the different classes of regularity.  For an arbitrary positive integer $q$ , an algorithm is provided for checking the sufficient conditions for the existence of such an invariant cylinder, which we conjecture that always exists. The algorithm was successfully applied up to $q=100$.
\end{abstract}


\bigskip

\noindent{\bf
In 1999, Géronimi et al  \cite{GFL} introduced a two-parameter family of third order differential equations called generalized Chazy differential equation. An interesting feature of the generalized Chazy differential equation is that its regularity changes with a positive integer parameter $q$. Indeed, for $q=1$ the generalized Chazy differential equation is discontinuous at $x=0$, whereas for $q$  a positive even integer it is polynomial, and for $q>1$ a positive odd integer it is continuous but not differentiable at $x=0$. They performed numerical computations and, under some constraints, they observed (only numerically) the existence of periodic solutions. Usually, to prove analytically the existence of periodic solutions in a given differential equation is not an easy problem. The difficulty increases significantly when the dimension or the order is greater than two.  In this paper, we develop a strategy to detect analytically the existence of periodic solutions in the generalized Chazy differential equation. Our strategy allows to establish sufficient conditions  ensuring that the generalized Chazy differential equation, for any positive integer $q$ , has actually an invariant topological cylinder foliated by periodic solutions in the $(x,\dot x,\ddot x)$-space. First,  we will focus our analysis in the cases $q=1,2,3$ (representatives of the above different classes of regularity), for which we will prove  the existence of such an invariant cylinder.  These initial cases will set forth the bases for an algorithmical approach when $q$  is an arbitrary given positive integer, which will be successfully applied up to $q=100$.}

\section{Introduction and statements of the main results}

In 1997, Feix et al. \cite{Fe} introduced the following general third order ordinary differential equation
\begin{equation}\label{e1}
\dddot x+ x^{3q+1}f(a,b)=0, \ \ \mbox{where $a=\dfrac{\dot x}{x^{q+1}}$ and $b=\dfrac{\ddot x}{x^{2q+1}}$},
\end{equation}
which are invariant under time translation and  rescaling symmetries.  By taking $f(a,b)=ka^2+b$, the differential equation \eqref{e1} becomes
\begin{equation}\label{e2}
\dddot x+ x^q \ddot x+ k x^{q-1}\dot x^2=0,
\end{equation}
which, for $q=1$ and $k=-3/2,$ corresponds to the well known Chazy differential equation, introduced by Chazy \cite{Ch} in 1911.

Numerical computations on the actual Chazy differential equation did not detect any periodic solution, while periodic solutions were numerically observed for a large number of initial conditions in the differential equation \eqref{e2} for $q = 2$ and $k = 3$. Those numerical observations were reported in 1999 by G\'eronimi et al. \cite{GFL}, which led them to introduce the so-called {\it generalized Chazy differential equation}
\begin{equation}\label{gcdeq}
\dddot x+|x|^q \ddot x+\dfrac{k |x|^q}{x}\dot x^2=0.
\end{equation}

Usually, to prove analytically the existence of periodic solutions in a given differential equation is not an easy problem. The difficult of the problem increases significantly when the dimension or the order is greater than two. The objective of this paper is double, first to provide an analytic proof of the existence of the periodic solutions detected only numerically in \cite{GFL}  in the generalized Chazy differential equation, and second to provide sufficient conditions ensuring that the generalized Chazy differential equation has, actually, an invariant cylinder foliated by periodic solutions in the $(x,\dot x,\ddot x)$-space.

The generalized Chazy differential equation can be written as the following first order differential system in $\R^3$
\begin{equation}\label{gcds}
X:\left\{\begin{aligned}
\dot x=&y,\\
\dot y=&z,\\
\dot z=&-|x|^q z+\dfrac{k |x|^q}{x}y^2.
\end{aligned}\right.
\end{equation}
When $k=q+1$ system \eqref{gcds} has the first integral
\begin{equation}\label{first}
H(x,y,z)=x z-\dfrac{y^2}{2}+|x|^q x y.
\end{equation}

Notice that the regularity of the generalized Chazy differential system \eqref{gcds} changes with $q$. Indeed, for $q=1$ the generalized Chazy differential system \eqref{gcds} is discontinuous on the straight line $x=0$, whereas for $q$  a positive even integer it is polynomial, and for $q>1$ an odd positive integer it is continuous but not differentiable on the straight line $x=0$.

\subsection{Main results} In this paper, we will propose an algorithmical approach to detect analytically the existence of an invariant cylinder foliated  by periodic orbits for any given positive integer $q$. But first, in order to set forth the bases of our approach, we shall focus our analysis in the cases $q=1,2,3,$ which are representatives of the above classes of regularity.

Our first main result is the following.

\begin{theorem}\label{main}
For $q=1,2,3$ and $k=q+1$, the generalized Chazy differential system \eqref{gcds} has an invariant cylinder foliated by periodic orbits.
\end{theorem}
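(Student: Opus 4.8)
\medskip
\noindent\textbf{Plan of the proof.}
The argument rests on two symmetries of \eqref{gcds} together with the first integral \eqref{first}. For $c>0$ put $g_c(x,y,z)=(c\,x,\,c^{q+1}y,\,c^{2q+1}z)$ and let $S=-\mathrm{Id}$. A direct check gives $X\circ g_c=c^{q}\,Dg_c\cdot X$ and $X\circ S=S\circ X$, so each $g_c$ and $S$ carry orbits of \eqref{gcds} to orbits (the $g_c$ merely rescaling time by the factor $c^{q}>0$), while $H\circ g_c=c^{2q+2}H$. The first use of this is a reduction: \emph{it suffices to exhibit one periodic orbit} $\gamma_0$ of \eqref{gcds}. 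Indeed, the curves $g_c(\gamma_0)$, $c>0$, are then pairwise distinct periodic orbits (distinct orbits are disjoint, and $g_d(\gamma_0)=\gamma_0$ with $d\neq1$ would force $\gamma_0$ to be unbounded), and $(p,c)\mapsto g_c(p)$ is a continuous injection of the cylinder $\gamma_0\times(0,\infty)$ into $\R^3\setminus\{\mathbf 0\}$ that is proper ($g_c(\gamma_0)$ leaves every compact subset of $\R^3\setminus\{\mathbf 0\}$ as $c\to0^+$ or $c\to\infty$), hence a homeomorphism onto its image; that image is an invariant topological cylinder foliated by the periodic orbits $g_c(\gamma_0)$.

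\medskip
\noindent
Next we locate such a periodic orbit, and we first observe that it must cross $\{x=0\}$. On $\{x>0\}$ the scaling invariants $u=y/x^{q+1}$, $v=z/x^{2q+1}$ and the new time $\d s=|x|^q\,\d t$ transform \eqref{gcds} into the planar system
\[
u'=v-(q+1)u^2,\qquad v'=-v-(q+1)u^2-(2q+1)uv,
\]
decoupled from $x'=ux$. Its only equilibria --- the origin (semi-hyperbolic, and in fact a saddle-node) and $P_*=\bigl(-2/(2q+1),\,4(q+1)/(2q+1)^2\bigr)$ (an unstable node) --- both lie on the parabola $\{v=u^2/2-u\}$, which is invariant; hence this parabola splits the plane into two invariant, simply connected, equilibrium-free regions, so the planar system has no periodic orbit at all, and therefore \eqref{gcds} has none in $\{x>0\}$, nor, by $S$, in $\{x<0\}$. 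We thus seek orbits crossing the section $\Sigma=\{x=0,\ y>0\}$, which is transverse since $\dot x=y>0$ there. Since $H\equiv-y^2/2$ on $\{x=0\}$, an orbit leaving $p=(0,y_0,z_0)\in\Sigma$ that returns to $\{x=0\}$ (which happens for $z_0$ in a set $D(y_0)$) does so at a point $(0,-y_0,\zeta(y_0,z_0))$; composing this transition with $S$ produces $\Phi:(0,y_0,z_0)\mapsto(0,y_0,-\zeta(y_0,z_0))$, a self-map of $\Sigma$ fixing $y_0$, whose fixed points are exactly the $S$-symmetric periodic orbits of \eqref{gcds} that meet $\{x=0\}$ twice per period. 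By $\zeta(c^{q+1}y_0,c^{2q+1}z_0)=c^{2q+1}\zeta(y_0,z_0)$ it is enough to take $y_0=1$: we must find $z_0\in D(1)$ with $z_0+\zeta(1,z_0)=0$, and we obtain this from the intermediate value theorem after showing that $D(1)$ is an interval on which $z_0+\zeta(1,z_0)$ changes sign.

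\medskip
\noindent
The hardest part is precisely this last one, the control of $\zeta(1,\cdot)$ near the ends of $D(1)$. Entering and leaving $\{x>0\}$ correspond to the $(u,v)$-orbit going to and coming from infinity, so the computation is governed by the planar system above and its Poincar\'e compactification: the transition orbits live in the invariant region below the parabola, they emanate from the unstable node at the infinite point $(u=+\infty,\,v/u^2=0)$, and --- depending on their position relative to the stable manifolds of the finite saddle-node and of the saddle at the infinite point $(u=-\infty,\,v/u^2=1/2)$ --- they either reach the stable node at $(u=-\infty,\,v/u^2=0)$, where $x$ returns to $0$ (so $z_0\in D(1)$), or escape to $x=+\infty$. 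Determining $D(1)$ and the asymptotics of $z=v\,x^{2q+1}$ along these orbits as $x\to0$, precisely enough to pin down the sign change of $z_0+\zeta(1,z_0)$, is a finite computation that we carry out by hand for $q=1,2,3$ and that, for an arbitrary $q$, is exactly what the algorithm announced in the statement systematizes.

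\medskip
\noindent
Finally, the three cases differ only in the regularity of \eqref{gcds} along $\{x=0\}$, which is immaterial for the topological fixed-point argument. For $q=2$ the system is polynomial, so the flow, $\zeta$ and $\Phi$ are smooth. For $q=3$ the right-hand side is $C^1$ across $\{x=0\}$, which is crossed transversally (with $\dot z\to0$ there), so $\zeta$ and $\Phi$ are continuous --- all the intermediate value theorem requires. For $q=1$ the system is discontinuous on $\{x=0\}$; read in the Filippov sense, $\{x=0,\ y\neq0\}$ is a crossing set, so orbits pass through it transversally and $\zeta$, $\Phi$ are again continuous. In every case the symmetries $g_c$ and $S$ hold exactly, so the reduction of the first paragraph applies once a single periodic orbit is produced.
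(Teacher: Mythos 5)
Your overall architecture is sound and in fact runs parallel to the paper's: both proofs exploit the scaling symmetry to reduce to a single periodic orbit, exploit the reflection $S=-\mathrm{Id}$ to replace a return map by a ``half-return plus symmetry'' map, and close with the intermediate value theorem. Your reduction to the planar system in the invariants $u=y/x^{q+1}$, $v=z/x^{2q+1}$ and the observation that the invariant parabola $v=u^2/2-u$ excludes periodic orbits confined to $\{x>0\}$ are correct and a nice complement (the paper instead restricts to an energy level $H=-\omega^2$ and works with the two graph sheets $y=f^\pm_{\omega,q}(x,z)$ over the $(x,z)$-plane). The symmetry bookkeeping ($H\equiv -y^2/2$ on $x=0$ forcing the return to $(0,-y_0,\zeta)$, and fixed points of $\Phi$ giving $S$-symmetric closed orbits) is also fine.

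The genuine gap is that the entire quantitative content of the theorem is deferred and never supplied. You write that determining $D(1)$, proving it is an interval on which the transition map $\zeta(1,\cdot)$ is defined and continuous, and pinning down the sign change of $z_0+\zeta(1,z_0)$ at its ends ``is a finite computation that we carry out by hand for $q=1,2,3$'' --- but you do not carry it out, and it is not a routine computation: the flow cannot be integrated, so one cannot simply ``compute'' $\zeta$. This step is precisely where all of the paper's work lies: for each $q\in\{1,2,3\}$ it builds an explicit compact trapping region $K^q$ bounded by the hyperbola arcs $\Sigma_D^q,\Sigma_I^q$ and five explicit curves, verifies by hand (reducing each transversality condition to the non-vanishing of an explicit polynomial on an explicit interval, certified by Sturm sequences and resultants) that the vector field points inward on all of $\partial K^q$ except the exit arc, invokes Poincar\'e--Bendixson to get the transition map $\Sigma_D^q\to\Sigma_I^q$, and then checks the two endpoint inequalities $u_{f,q}^D+u_{i,q}^I>0$ and $u_{i,q}^D+u_{f,q}^I<0$ that force the sign change. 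Without an equivalent of this construction --- e.g.\ explicit separatrix bounds in your compactified $(u,v)$-plane strong enough to locate the endpoints of $D(1)$ and the asymptotics of $z=vx^{2q+1}$ as $x\to 0^+$ --- your argument establishes only that \emph{if} a symmetric crossing orbit exists it generates the cylinder, not that one exists. As written, the proposal is a correct reduction plus an unproven claim, so it does not yet prove the theorem.
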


Theorem \ref{main} is proven in Section \ref{sec:proofU1}. Its proof is divided into three subsections, which cover the cases $q=1$ (discontinuous), $q=2$ (polynomial), and $q=3$ (continuous), respectively. The proof follows by showing the existence of a fixed point of a suitable Poincar\'{e} return map in each negative energy level of the first integral $H$. Due to the nonlinear nature of the generalized Chazy differential system \eqref{gcds}, it is not possible to integrate the flow for obtaining an explicit expression of the Poincar\'{e} return map, thus tools from the qualitative theory must be employed.

In what follows, we provide the idea of the proof. First, by applying a rescaling in the variables and in the time, we reduce the analysis to the energy level $H=-1$, which is a topological cylinder. Second, by taking advantage of a symmetry of the system in this level, we show that a Poincar\'{e} return map is well defined, for which we can ensure the existence of a fixed point via the construction of a convenient trapping region. This implies the existence of a periodic orbit in each negative energy level, which varies smoothly with the energy, producing then a topological cylinder foliated by periodic orbits.

The above strategy, developed for proving the existence of invariant cylinders foliated by periodic orbits for $q=1,2,3$ can be extended for other positive integer values of $q$. Nevertheless, we have to make the following consideration upon this strategy. At some point in the proof of Theorem \ref{main}, it  will be  necessary to estimate the number of roots of some polynomials on specific intervals. While for $q=1,2,3$, it can be done with a Sturm procedure (see, for instance, \cite[Theorem 5.6.2]{SB}), for an undetermined $q$  the Sturm procedure does not work well. Alternatively, in what follows, our second main result provides sufficient conditions ensuring the existence of an invariant cylinder foliated by periodic orbits of the generalized Chazy differential system \eqref{gcds} when $q$ is an arbitrary given positive integer.

\begin{theorem}\label{main-general}
Let $q$  be a positive integer and consider the following polynomials on the variable $u$:
\begin{equation}\label{polynomials}
\begin{aligned}
P_0(u)=&8 q^2 - 4 q^2 (1 + 4 q) u +
 8 q^3 u^2-
 16 (1 + q)^2 u^{2(q-1)} +
 8 (1 + q) (3 + 2 q) (1 + 4 q) u^{2q-1} \\
 &  - (1 + 2 q) (9 + 2 q (7 + 4 q)^2) u^{2q}+
 8 q (1 + 4 q) (4 + q (5 + 2 q)) u^{2q+1}\\
&  -
 4 q^2 (7 + 4 q (2 + q)) u^{2(q+1)}  +
 8 (1 + q) (1 + 4 q) u^{4q+1}  - 16 q (1 + q) u^{2 (1+ 2 q)},\\
P^+ (u)=&2^{\frac{5+4q}{2(1+q)}} q (1 + q)^{\frac{1}{1+q}} (2 + q) -
 2^{\frac{1}{1+q}}(1 + q)^{\frac{2}{1+q}} (3 + 2 q)^2 u - 8 \sqrt{2} q u^q \\
 & +
 2^{\frac{4+3q}{2(1+q)}} (1 + q)^{\frac{2+q}{1+q}} (9 + 2 q) u^{1+q} - 2 (9 + 10 q) u^{1+2q} \\
 &  +
2^{\frac{5+4q}{2(1+q)}} q (1 + q)^{\frac{1}{1+q}} u^{2(1+q)} -
 4 \sqrt{2} q u^{2+3q} ,\\
P^- (u)=&2^{\frac{5+4q}{2(1+q)}} q (1 + q)^{\frac{1}{1+q}} (2 + q) -
 2^{\frac{1}{1+q}}(1 + q)^{\frac{2}{1+q}} (3 + 2 q)^2 u - 8 (-1)^q \sqrt{2} q u^q \\
 & \phantom{\le} +
 (-1)^q 2^{\frac{4+3q}{2(1+q)}} (1 + q)^{\frac{2+q}{1+q}} (9 + 2 q) u^{1+q} - 2 (9 + 10 q) u^{1+2q} \\
 & \phantom{\le} +
2^{\frac{5+4q}{2(1+q)}} q (1 + q)^{\frac{1}{1+q}} u^{2(1+q)} -
 4 (-1)^{q} \sqrt{2} q u^{2+3q}.
\end{aligned}
\end{equation}
Assume that the following conditions hold:
\begin{itemize}
\item[{\bf C1.}] the polynomial $P_0$ does not vanish on $I_0:=\big(2\,,\,(1+4q)/(2q)\big)$;
\item[{\bf C2.}] the polynomial $P^+ $ has  at most one root (counting its multiplicity) in $I^+ :=\big(0\,,\, u_{i,q}^D\big)$;
\item[{\bf C3.}] the polynomial $P^- $ has at most one root (counting its multiplicity) in $I^-:=(-2,0)$.
\end{itemize}
Then, for $k=q+1,$ the generalized Chazy differential system \eqref{gcds} has an invariant cylinder foliated by periodic orbits.
\end{theorem}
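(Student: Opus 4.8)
The plan is to follow, for an arbitrary positive integer $q$, the very same architecture used to prove Theorem \ref{main}, isolating the only places where the particular values $q=1,2,3$ intervened — the Sturm root counts — so that they can be replaced by hypotheses \textbf{C1}--\textbf{C3}. First I would invoke the scaling symmetry behind \eqref{e1}: when $k=q+1$ the first integral $H$ of \eqref{first} is quasi-homogeneous (with weights $(1,q+1,2q+1)$ and a time rescaling $t\mapsto\lambda^{-q}t$), so that the flow of \eqref{gcds} restricted to a level set $\{H=c\}$ with $c<0$ is conjugate to its restriction to the single level set $\Sigma:=\{H=-1\}$, which is a topological cylinder. It therefore suffices to exhibit one periodic orbit of $X|_\Sigma$. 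On $\Sigma$ the vector field carries the involutive symmetry exploited in Section \ref{sec:proofU1}; using it one shows that a Poincar\'e return map $\Pi$ is well defined on a one-dimensional section $A$ of $\Sigma$, which I parametrise by a real variable $u$. Because the plane $x=0$ meets $\Sigma$ and separates trajectories into those running over $x>0$ and those over $x<0$, the section $A$ breaks into the branch parametrised by $u\in I^+=\big(0,u_{i,q}^D\big)$ and the branch parametrised by $u\in I^-=(-2,0)$, with $u_{i,q}^D$ the explicit constant produced by the construction; a fixed point of $\Pi$ on either branch yields the desired periodic orbit.

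The whole construction is governed by three scalar functions of $u$, and the point of Theorem \ref{main-general} is that \textbf{C1}--\textbf{C3} encode exactly the qualitative behaviour of those functions that was verified by Sturm's procedure for $q=1,2,3$. The transversality of $X$ to the section, together with the fact that the half-orbits really cross from one sheet of $\Sigma$ to the other, is controlled on the window $I_0=\big(2,(1+4q)/(2q)\big)$ by a function whose zeros are precisely the roots of $P_0$ in \eqref{polynomials}; thus \textbf{C1} makes $\Pi$ well defined and continuous for every $q$. On each branch the displacement function $u\mapsto\Pi(u)-u$ — after clearing the radicals entering through the normalisation constants $2^{1/(1+q)}$, $(1+q)^{1/(1+q)}$ of $\Sigma$ and multiplying by a factor that \textbf{C1} keeps nonzero — coincides, up to the change of variables used in the proof of Theorem \ref{main}, with $P^+(u)$ on $I^+$ and with $P^-(u)$ on $I^-$, the $(-1)^q$ discrepancy between $P^+$ and $P^-$ reflecting the sign of $x^q$ on the two sheets. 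Hypotheses \textbf{C2} and \textbf{C3} — at most one root, counted with multiplicity, on $I^+$ and $I^-$ — together with the endpoint behaviour of $\Pi$ (read off, as for $q=1,2,3$, from the limiting dynamics as $x\to0$ and as $u$ reaches the boundary of $A$) then force the displacement function to change sign exactly once on one of the branches; hence a convenient trapping interval exists and the intermediate value theorem provides a fixed point $u^\ast$.

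The fixed point $u^\ast$ is a periodic orbit $\gamma$ of \eqref{gcds} on $\Sigma=\{H=-1\}$. Transporting $\gamma$ through the scaling family produces a periodic orbit in every level $\{H=c\}$ with $c<0$, and since these orbits depend smoothly on $c$, their union is an invariant topological cylinder foliated by periodic orbits of \eqref{gcds} for $k=q+1$, which is the assertion of Theorem \ref{main-general}.

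The delicate part is not conceptual but computational and uniform in $q$: one must actually carry out the reduction that turns the transcendental data of the return map — quadratures and implicit relations involving $|x|^q$, the powers $u^q$, $u^{2q}$, $u^{3q}$, and the algebraic normalisation constants above — into the precise polynomials $P_0$, $P^+$, $P^-$ displayed in \eqref{polynomials}, and one must check that every step performed by hand for $q=1,2,3$ (the transversality verification, the endpoint estimates, the location of the trapping interval inside the domain of $\Pi$) goes through verbatim for general $q$, so that \textbf{C1}--\textbf{C3} are genuinely the \emph{only} extra information required. In particular, translating ``at most one root on the interval'' into the monotonicity / single-sign-change statement the trapping-region argument actually consumes, and ruling out that for large $q$ some new degeneracy of $\Sigma$ or of the section appears outside the scope of \textbf{C1}--\textbf{C3}, is where the proof must be made watertight.
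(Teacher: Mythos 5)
Your overall architecture (reduction to the level $H=-1$, the symmetry $\widetilde X_q^{+}(u,v)=-\widetilde X_q^{-}(-u,-v)$, a transition map between two arcs of the hyperbola $uv+1=0$, and an intermediate-value argument producing a point with $P_q(p^*)=-p^*$) matches the paper. But there is a genuine gap in how you use the hypotheses: you assign $P^{+}$ and $P^{-}$ the role of (a rationalization of) the displacement function $u\mapsto\Pi(u)-u$ of the return map, and $P_0$ the role of a transversality function of the flow to the section. That is not what these polynomials are, and the identification you propose cannot be carried out: the flow of $\widetilde X_q^{-}$ is not integrable, so the return map is transcendental and is never computed in the paper, nor could its displacement be ``cleared of radicals'' into a polynomial. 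In the actual proof, $P_0$, $P^{+}$, $P^{-}$ arise as rationalizations of $\langle\nabla g,\widetilde X_q^{-}\rangle$ along three explicitly chosen boundary arcs $U_2$, $U_3$, $U_4$ of a compact trapping region $K^q$ (a line segment $v=qu-(1+4q)/2$ over $I_0$, and two arcs of $v=(1+q)\sqrt{2}\lvert u\rvert^q/q+\mathrm{const}$ over $I^{+}$ and $I^{-}$); conditions {\bf C1}--{\bf C3} guarantee that the vector field points inwards on those arcs, and together with the unconditional analysis on $\Sigma_D^q$, $U_1$, $U_5$, $\Sigma_I^q$ and the Poincar\'e--Bendixson theorem they make the transition map $P_q:\Sigma_D^q\to\Sigma_I^q$ well defined. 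This also explains why $I^{+}=(0,u_{i,q}^D)$ and $I^{-}=(-2,0)$ are disjoint from the section $\Sigma_D^q$ (parametrized by $u\in[u_{i,q}^D,2]$): they are the $u$-ranges of boundary arcs of $K^q$, not branches of the Poincar\'e section as you assume.

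Consequently the fixed point does not come from {\bf C2}/{\bf C3} forcing a sign change of a displacement polynomial; it comes from the elementary inequalities $u_{f,q}^D+u_{i,q}^I>0$ and $u_{i,q}^D+u_{f,q}^I<0$ (which require the lemma locating the unique positive root $x_{i,q}^I$ of $P(x)=-q-2^{\frac{1}{2(1+q)}}(1+q)^{\frac{2+q}{1+q}}x+\sqrt{2}(1+q)x^{1+q}$ and showing $x_{i,q}^I<2$), applied to the continuous map $h_q(p)=\pi(P_q(p)+p)$ on $\Sigma_D^q$. Your proposal omits both the construction of $K^q$ and this endpoint lemma, which are the substantive content of the proof; as written, the step asserting that {\bf C2} and {\bf C3} force the displacement function to change sign exactly once would fail, because no such polynomial displacement function exists.
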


Theorem \ref{main-general} is proved in Section \ref{sec:q-arbitrary}. Notice that it allows the development of an algorithmical approach based on the Sturm procedure to detect analytically the existence of an invariant cylinder foliated by periodic orbits  of the generalized Chazy differential system \eqref{gcds} when $q$ is an arbitrary given positive integer. A Mathematica algorithm is provided as the supplementary material. We have ran the algorithm for $q=1,\ldots,100$, which provided the following result:

\begin{corollary}
For $k=q+1$ and $1\le q \le 100$, the generalized Chazy differential system \eqref{gcds} has an invariant cylinder foliated by periodic orbits.
\end{corollary}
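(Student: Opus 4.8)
The plan is to deduce the corollary directly from Theorem \ref{main-general}: for each integer $q$ with $1\le q\le 100$ one only has to verify the hypotheses \textbf{C1}, \textbf{C2} and \textbf{C3}, so the statement reduces to three certified root-counting computations per value of $q$, after which Theorem \ref{main-general} applies verbatim. No attempt is made here to establish \textbf{C1}--\textbf{C3} for all $q$ simultaneously — that is the open conjecture mentioned in the abstract; the corollary is its finite, machine-checkable portion.

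First, for \textbf{C1}, I would observe that $P_0$ is a polynomial of degree $4q+2$ whose coefficients are polynomials in $q$ with integer coefficients, hence integers for integer $q$, and that $I_0=\big(2,(1+4q)/(2q)\big)$ has rational endpoints. One then forms the Sturm sequence of $P_0$ in exact arithmetic and compares the sign-change counts at the two endpoints; \textbf{C1} holds exactly when this difference is $0$, taking the usual care that a root sitting at an endpoint is not counted. For \textbf{C2} and \textbf{C3}, the polynomials $P^{+}$ and $P^{-}$ have degree $3q+2$, but their coefficients are algebraic numbers — rational multiples of products of rational powers of $2$ and of $1+q$ — so that, after an explicit monomial substitution $u\mapsto 2^{\alpha}(1+q)^{\beta}v$, they may be replaced by polynomials with rational coefficients. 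For each fixed $q$ these coefficients, together with the explicitly defined right endpoint $u_{i,q}^{D}$ appearing in \textbf{C2}, are handled symbolically by computer algebra, so the Sturm counts on $I^{+}=\big(0,u_{i,q}^{D}\big)$ and on $I^{-}=(-2,0)$ are rigorous. Since \textbf{C2} and \textbf{C3} demand at most one root \emph{counting multiplicity}, I would also compute $g^{\pm}=\gcd\!\big(P^{\pm},(P^{\pm})'\big)$ and check, by the same Sturm procedure, that $g^{\pm}$ has no root in the relevant interval; combined with a count of at most one distinct root of $P^{\pm}$ there, this shows that the possible root is simple, whence the total multiplicity is at most one.

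Running this procedure — implemented in the supplementary Mathematica code — for $q=1,\ldots,100$ should return, in every case, that $P_0$ does not vanish on $I_0$, that $P^{+}$ has at most one (simple) root in $I^{+}$, and that $P^{-}$ has at most one (simple) root in $I^{-}$; that is, \textbf{C1}--\textbf{C3} all hold. Theorem \ref{main-general} then yields, for each such $q$ with $k=q+1$, an invariant cylinder of system \eqref{gcds} foliated by periodic orbits, which is the assertion of the corollary.

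The hard part here is purely computational rather than conceptual: all arithmetic must be exact (rational, or algebraic for $P^{\pm}$ and for $u_{i,q}^{D}$) with no numerical rounding, and the Sturm sequences grow rapidly with $q$ — $P_0$ already has degree $402$ when $q=100$ — so the expression swell in the remainder computations must be kept manageable. This is a finite, verifiable certificate rather than an obstruction, but it is where the content of the corollary sits, the genuine mathematics having been carried out in the proof of Theorem \ref{main-general}.
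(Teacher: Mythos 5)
Your proposal is correct and matches the paper's route exactly: the corollary is obtained by running the Sturm-based algorithm (the supplementary Mathematica code) to verify conditions \textbf{C1}--\textbf{C3} for each integer $1\le q\le 100$ and then invoking Theorem \ref{main-general}. The extra implementation details you supply (rationalizing the coefficients of $P^{\pm}$ and certifying simplicity via $\gcd(P^{\pm},(P^{\pm})')$) are sensible refinements of the same computation, not a different argument.
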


The obtained results lead us to make the following conjecture:

\begin{conjecture}\label{main.bis}
For $k=q+1$ and $q$  a positive integer, the generalized Chazy differential system \eqref{gcds} has an invariant cylinder foliated by periodic orbits.
\end{conjecture}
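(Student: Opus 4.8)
The plan is to carry out, uniformly in $q$, the scheme outlined after Theorem~\ref{main}, with the hypotheses C1--C3 replacing the Sturm computations that settled the cases $q=1,2,3$. First, the vector field $X$ in \eqref{gcds} admits the scaling $(x,y,z)\mapsto(\la x,\la^{q+1}y,\la^{2q+1}z)$ together with the time change $t\mapsto\la^{-q}t$, under which the first integral \eqref{first} becomes $\la^{2(q+1)}H$; taking $\la>0$, this conjugates the flow on each negative level $\{H=c\}$ with the flow on $\{H=-1\}$, so it is enough to produce one periodic orbit on $\{H=-1\}$: its scaled copies then sweep out a smoothly embedded cylinder meeting every negative level, which is the desired invariant topological cylinder foliated by periodic orbits. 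A direct computation shows that $\{H=-1\}$ is a topological cylinder on which $X$ has no equilibrium (the equilibria of $X$ are the points $(x,0,0)$, where $H=0$), so every periodic orbit on it is non-contractible; moreover $\{x=0\}\cap\{H=-1\}$ is the pair of lines $\ell_\pm=\{x=0,\ y=\pm\sqrt2\}$, crossed transversally by the flow since there $\dot x=\pm\sqrt2\neq0$ (so the discontinuity of $X$ along $x=0$ when $q=1$, and its loss of smoothness there when $q$ is odd, are harmless), and any non-contractible orbit must meet both of them.

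Second, I would exploit the central involution $S(x,y,z)=(-x,-y,-z)$, which is a symmetry of $X$ fixing $H$; hence it acts freely on $\{H=-1\}$ and interchanges $\ell_+$ and $\ell_-$. Since $\dot x=y$, the flow enters $\{x>0\}$ through $\ell_+$ and enters $\{x<0\}$ through $\ell_-$; let $\Pi\colon\ell_+\to\ell_-$ be the transition map through the sheet $\{x>0\}$, defined on those points of $\ell_+$ whose forward orbit returns to $\{x=0\}$. Because $S$ commutes with the flow and sends $\{x<0\}$, $\ell_-$ to $\{x>0\}$, $\ell_+$, the transition map through $\{x<0\}$ is $S\circ\Pi\circ S$, so the first-return map to $\ell_+$ equals $(S\circ\Pi)^2$; it therefore suffices to find a fixed point of the one-dimensional map $S\circ\Pi\colon\ell_+\to\ell_+$, which produces an $S$-symmetric periodic orbit. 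After a suitable rescaling near $x=0$ resolving the planar reduction of $X$ on $\{H=-1\}$ and providing good coordinates on $\ell_\pm$, $S\circ\Pi$ becomes a continuous self-map of an interval whose image is tracked by following a distinguished orbit (with parameter $u_{i,q}^D$); the intervals $I_0,I^+,I^-$ are the ranges of the auxiliary parameter $u$ that must be controlled, the endpoints $2$ and $\pm2$ marking the degenerate crossings coming from $\ell_\pm$.

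Third, granting C1--C3 I would build an explicit trapping interval. After clearing denominators, the functions governing the construction are exactly the polynomials \eqref{polynomials}. Condition C1 ($P_0$ non-vanishing on $I_0$) fixes a definite sign, which both rules out escape of orbits to infinity in $\{x>0\}$ before their return to $\{x=0\}$ (so $\Pi$ is defined where needed) and forces the relevant coordinate to be monotone along the arcs in play. Conditions C2 and C3 ($P^+$, resp.\ $P^-$, having at most one root, counted with multiplicity, on $I^+$, resp.\ $I^-$) control the position in $\ell_-$ of the $\Pi$-image of the candidate interval: they force each bounding arc of the region to be met by the flow at most once and from the inside, so that $S\circ\Pi$ maps the interval into itself; the two versions $P^+$ and $P^-$ --- differing only in the terms carrying the nonlinearity $|x|^q$, through the factor $(-1)^q$ --- correspond to the two sheets $\{x\gtrless0\}$ and the two signs of $u$, while the degenerate crossings at the endpoints are checked to produce no spurious fixed point. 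The one-dimensional Brouwer theorem --- the intermediate value theorem for $u\mapsto(S\circ\Pi)(u)-u$ --- then yields a fixed point of $S\circ\Pi$, i.e.\ an $S$-symmetric periodic orbit on $\{H=-1\}$, and the scaling symmetry carries it to a periodic orbit in each negative level, these sweeping out the desired invariant cylinder.

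The main obstacle is the bridge, uniform in $q$, between geometry and algebra: deriving the closed forms of $P_0$, $P^+$, $P^-$ in \eqref{polynomials} and proving that the transversality and ``flow points inward'' requirements of the trapping interval are \emph{equivalent} to C1--C3 on the precise intervals $I_0=(2,(1+4q)/(2q))$, $I^+=(0,u_{i,q}^D)$, $I^-=(-2,0)$. This needs a careful study of the planar reduction of $X$ on $\{H=-1\}$: choosing the right coordinates near $x=0$, identifying $u_{i,q}^D$, separating the cases $q$ even and $q$ odd (where the $|x|^q$ term, hence the $(-1)^q$ in $P^-$, is decisive), excluding escape of orbit arcs through the wrong side of the region, and handling the limiting dynamics along $\ell_\pm$. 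Once C1--C3 hold, the fixed-point step and the passage to the whole cylinder are routine.
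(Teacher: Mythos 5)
There is a genuine gap, and it is the decisive one: your argument is conditional where the statement is not. The statement you are asked to prove is an \emph{unconditional} claim for every positive integer $q$, but your proposal explicitly proceeds by ``granting C1--C3'' and then building the trapping construction and the fixed point of $S\circ\Pi$. That reduction is (essentially) the content of Theorem~\ref{main-general}, which the paper does prove; the paper then deliberately downgrades the all-$q$ statement to a \emph{conjecture} precisely because nobody has shown that the root-counting conditions {\bf C1}, {\bf C2}, {\bf C3} on $P_0$, $P^+$, $P^-$ hold for every $q$ --- they have only been verified algorithmically (Sturm sequences) for $1\le q\le 100$. Your sketch never addresses this: there is no uniform-in-$q$ estimate showing $P_0\neq 0$ on $\bigl(2,(1+4q)/(2q)\bigr)$ or that $P^{\pm}$ have at most one root on $I^{\pm}$, and without that the argument proves at most the conditional theorem, not the conjecture. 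Deriving those bounds for arbitrary $q$ is the actual open problem; everything else you describe (scaling between energy levels, the involution $(x,y,z)\mapsto(-x,-y,-z)$, Poincar\'e--Bendixson in a trapping region, the intermediate value theorem for the symmetric fixed point) is already carried out in the paper for the conditional statement.

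A secondary, less important point: your guessed geometry does not quite match where the polynomials \eqref{polynomials} come from. The paper's section is the hyperbola $xz+\omega^2=0$ (i.e.\ $y=0$ on the energy surface), not the plane $x=0$; the surface $\{H=-\omega^2\}$ is split into the two sheets $y=f^{\pm}_{\omega,q}(x,z)$, each reduced to a planar vector field $\widetilde X_q^{\pm}$ in $(u,v)$, and $P_0$, $P^+$, $P^-$ arise by rationalizing the normal component $\langle\nabla g,\widetilde X_q^-\rangle$ along the explicit boundary arcs $U_2$, $U_3$, $U_4$ of the compact region $K^q$; the intervals $I_0$, $I^+$, $I^-$ are simply the $u$-ranges of those arcs, not ``degenerate crossings coming from $\ell_{\pm}$.'' This mismatch would be repairable, but the unconditional verification of {\bf C1}--{\bf C3} would not, and that is why the statement remains a conjecture.
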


Of course, the proof of Conjecture  \ref{main.bis} relies only on checking whether conditions {\bf C1}, {\bf C2}, and {\bf C3} of Theorem \ref{main-general} hold for any positive integer $q$.

\subsection{Structure of the paper} In Section \ref{sec:setup}, we provide the common bases for the proofs of our main results (Theorems \ref{main} and~\ref{main-general}), by reducing the problem to study the transition maps of planar vector fields. In Section \ref{sec:proofU1}, we present the proof of Theorem \ref{main}. Some details of this proof, concerning the Sturm procedure, will be provided in the Appendix. In Section \ref{sec:q-arbitrary}, we present the proof of Theorem \ref{main-general}. A Mathematica algorithm, based on a Sturm procedure, for checking the conditions of Theorem \ref{main-general} is provided as supplementary material.

\section{Reduced problem}\label{sec:setup}

Consider the hyperbola $\widetilde \Sigma=\{(u,v):\, u v+1=0\}$ and its positive and negative branches
\[
\widetilde \Sigma^+=\{(u,v):\,u v+1=0,\, u>0\} \ \text{and} \
\widetilde \Sigma^-=\{(u,v):\,u v+1=0,\, u<0\},
\]
respectively. In this section, we shall see that the problem of showing the existence of an invariant cylinder foliated by periodic orbits of the $3$D Chazy differential system \eqref{gcds} is equivalent to show the existence of a point $p$ in $\widetilde \Sigma^+$ whose flow through a planar differential system (associated to the restriction of \eqref{gcds} to the energy level $H=-1$) intersects $\widetilde \Sigma^-$ at $-p$.

Consider the first integral \eqref{first}. By solving $H(x,y,z)=-\omega^2$, with $\omega>0$, in the variable $y$, we obtain a family of  invariant surfaces $S_{\omega,q}:=S_{\omega,q}^-\cup S_{\omega,q}^+$, where
\[
S_{\omega,q}^{\pm}=\{(x,y,z)\in\R^3:\, y=f^{\pm}_{\omega,q}(x,z)\,\,\text{and}\,\, x z+\omega^2\geq0\},
\]
with
\[
f^{\pm}_{\omega,q}(x,z)=x \lvert x \rvert^q \pm\sqrt{x^{2(1+q)}+2xz+2\omega^2}.
\]
Notice that, for each $\omega$, $S_{\omega,q}^-$ and $S_{\omega,q}^+$ intersect the plane $y=0$ on the hyperbola
\[
\Sigma_{\omega}=\{(x,y,z):\,x z+\omega^2=0\}
\]
and, therefore, $S_{\omega,q}$ is a topological cylinder. The basis of our approach consists in proving that the invariant surface $S_{\omega,q}$ contains a periodic orbit $\gamma_{\omega,q}$ for each $\omega$.

In order to do that, we will define a Poincar\'{e} return map on the branches of the hyperbola $\Sigma_{\omega}^{\pm}$, namely,
\[
\Sigma_{\omega}^-=\{(x,y,z):\,x z+\omega^2=0,\, x<0\}, \quad \Sigma_{\omega}^+=\{(x,y,z):\,x z+\omega^2=0,\, x>0\}.
\]

Denote by $X_{\omega,q}^-$ and $X_{\omega,q}^+$ the reduced systems of \eqref{gcds} on $S_{\omega,q}^-$ and $S_{\omega,q}^+$, respectively, which are given by
\[
X_{\omega,q}^{\pm}:
\left\{
\begin{aligned}
\dot x=&x \lvert x  \rvert^q \pm\sqrt{x^{2(1+q)}+2xz+2\omega^2},\\
\dot z=&\frac{\lvert x  \rvert^q}{x} (-x z-(q+1) \left(x \lvert x  \rvert^q \pm\sqrt{x^{2(1+q)}+2xz+2\omega^2}\right)^2,
\end{aligned}\right.
\]
for $xz+\omega^2\geq0$. Let $\phi^\pm_{\omega,q}(t,\cdot)$ denote the flow of the vector field $X_{\omega,q}^\pm$.

In order to conclude the existence of a periodic orbit contained in $S_{\omega,q}$, it is sufficient to show the existence of a point $p_{0,q}\in \Sigma^-_{\omega,q}$ and $t_{0,q},t_{1,q}>0$ such that
\begin{equation}\label{fundrelation}
p_{1,q}:=\phi^-_{\omega,q}(t_{0,q},p_{0,q})\in \Sigma^+_{\omega}\ \text{and} \ \phi^+_{\omega,q}(t_{1,q},p_{1,q})=p_{0,q}.
\end{equation}
In this case the periodic orbit is given by $\gamma_{\omega,q}=\gamma^+_{\omega,q}\cup \gamma^-_{\omega,q},$ where
\[
\begin{aligned}
\gamma^-_{\omega,q}=&\{(x,y,z):\, (x,z)=\phi_{\omega,q}^-(t,p_{0,q}),\, y=f^-_{\omega,q}(\phi_{\omega,q}^-(t,p_{0,q})),\,\,\text{and}\,\, 0\leq t\leq t_{0,q}\}\,\,\text{and}\\
\gamma^+_{\omega,q}=&\{(x,y,z):\, (x,z)=\phi_{\omega,q}^+(t,p_{1,q}),\, y=f^+_{\omega,q}(\phi_{\omega,q}^+(t,p_{1,q})),\,\,\text{and}\,\, 0\leq t\leq t_{1,q}\}.
\end{aligned}
\]

Now, in order to simplify the reduced systems $X^{\pm}_{\omega,q}$, we apply the following change of variables and time-rescaling
\[
(x,z)=(\omega^{\frac{1}{1+q}} u,\omega^{\frac{1+2q}{1+q}}v) \ \text{and} \  t=\omega^{-\frac{q}{1+q}}\tau.
\]
Thus, the vector fields $X_{\omega,q}^\pm$ become
\begin{equation}\label{eq:vector}
\widetilde X_q^{\pm}:
\left\{
\begin{aligned}
u'=&u \lvert u \rvert^q \pm\sqrt{u^{2(1+q)}+2uv+2},\\
v'=&\frac{\lvert u \rvert^q}{u}\left (-u v-(q+1) \left(u \lvert u \rvert^q \pm\sqrt{u^{2(1+q)}+2uv+2}\right)^2\right),
\end{aligned}\right.
\end{equation}
for $u v+1\geq0$, and the curves $\Sigma_{\omega}^+$ and $\Sigma_{\omega}^-$ become $\widetilde \Sigma^+$ and $\widetilde \Sigma^-$, respectively.

Notice that, if $\phi_q^\pm(t,\cdot)$ denote the flow of the vector fields $\widetilde X_q^\pm$, then
\[
\phi^{\pm}_{q,\omega}(t,p)=\Omega\, \phi_q^{\pm}(\omega^{\frac{q}{1+q}}t,\Omega^{-1} p)\,\,\text{where}\,\, \Omega=\left(\begin{array}{cc}\omega^{\frac{1}{1+q}}&0\\
0&\omega^{\frac{1+2q}{1+q}}\end{array}\right).
\]

The existence of a periodic orbit will follow by showing the existence of a point $p_q^*=(u_q^*,-1/u_q^*)\in \widetilde\Sigma^+$ and a time $t_q^*>0$ such that
\begin{equation}\label{symcond}
\phi_q^-(t_q^*,p_q^*)=-p_q^*=(-u_q^*,1/u_q^*)\in \widetilde\Sigma^-.
\end{equation}
Indeed the vector fields $\widetilde X_q^{+}$ and $\widetilde X_q^{-}$ satisfy $\widetilde X_q^{+}(u,v)=-\widetilde X_q^{-}(-u,-v)$. This means that $\phi_q^+(t,p)=-\phi_q^-(t,-p)$ and, therefore, $\phi_q^+(t_q^*,-p_q^*)=-\phi_q^-(t_q^*,p_q^*)=p_q^*$. Hence, relationship \eqref{fundrelation} will hold by taking $$p_{0,q}=\Omega \, p_q^* \ \text{and} \ t_{0,q}=t_{1,q}=\omega^{\frac{1+q}{q}}t_q^*.$$

\section{Proof of Theorem \ref{main}}\label{sec:proofU1}

This section is devoted to the proof of Theorem \ref{main}, which is divided into three subsections, which cover the cases: $q=1$ (discontinuous), in Subsection \ref{sec:q1}; $q=2$ (polynomial), in Subsection \ref{sec:q2}, and $q=3$ (continuous), in Subsection \ref{sec:q3}. Some details concerning the Sturm procedure (see, for instance, \cite[Theorem 5.6.2]{SB}) will be provided in the Appendix.

\subsection{ Proof of Theorem \ref{main} for the discontinuous case $q=1$.}\label{sec:q1}  In this case, it follows from \eqref{eq:vector} that
\[
\widetilde X_1^{\pm}=\left\{\begin{array}{l} \widetilde X_{1,L}^{\pm}\quad \text{if}\quad u<0,\vspace{0.2cm}\\ \widetilde X_{1,R}^{\pm}\quad \text{if}\quad u>0, \end{array}\right.
\]
for $u v+1\geq0$, where
\[
\widetilde X_{1,L}^{\pm}:
\left\{
\begin{aligned}
u'=&-u^2 \pm\sqrt{u^{4}+2uv+2},\\
v'=&u v+2 \left(u \lvert u \rvert \pm\sqrt{u^{4}+2uv+2}\right)^2,
\end{aligned}\right.
\]
and
\[
\widetilde X_{1,R}^{\pm}:
\left\{
\begin{aligned}
u'=&u^2 \rvert \pm\sqrt{u^{4}+2uv+2},\\
v'=&-u v-2 \left(u \lvert u \rvert \pm\sqrt{u^{4}+2uv+2}\right)^2.
\end{aligned}\right.
\]
See Figure \ref{f1} for an idea of the phase space of the vector fields $\widetilde X_{1}^-$ (in the left) and  $\widetilde X_{1}^+$ (in the right). Here, the Filippov's convention \cite{Filippov88} is assumed for the trajectories of $\widetilde X_1^{\pm}$.

\begin{figure}[H]
\begin{minipage}{0.45\linewidth}
\centering
\includegraphics[width=5.5cm]{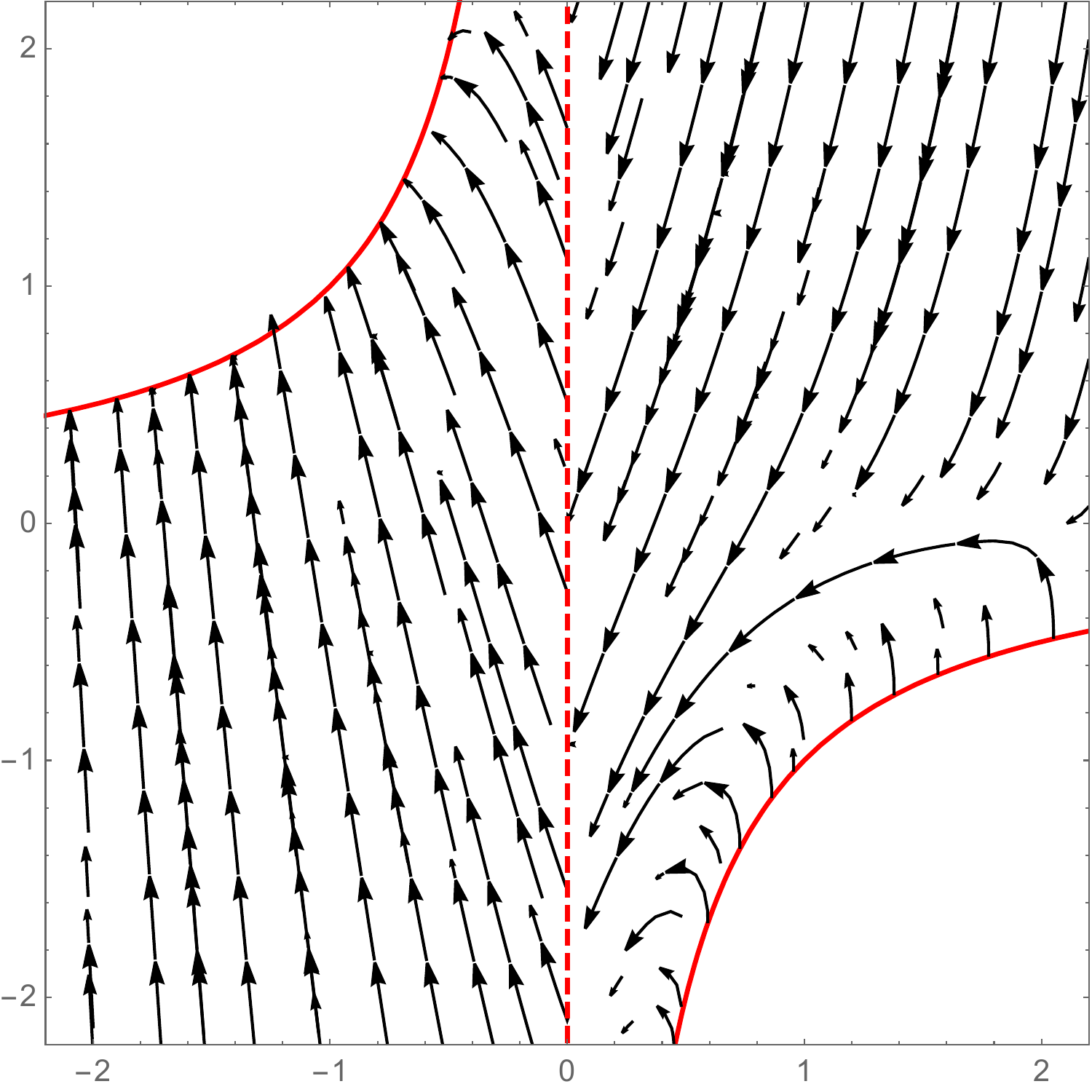}
\put(-120,124){$\widetilde \Sigma^-$}
\put(-45,31){$\widetilde \Sigma^+$}
\end{minipage}
\begin{minipage}{0.45\linewidth}
\centering
\includegraphics[width=5.5cm]{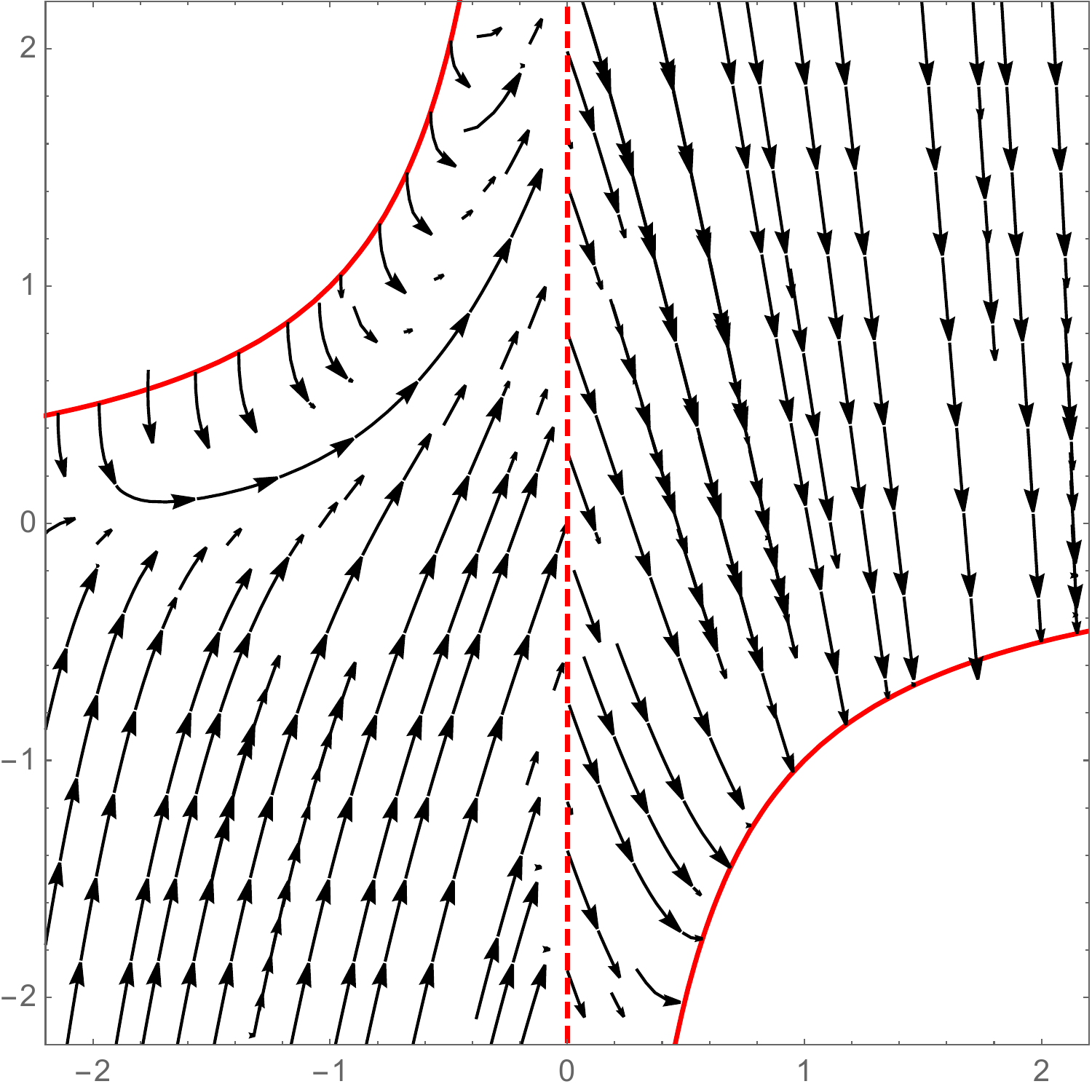}
\put(-120,124){$\widetilde \Sigma^-$}
\put(-45,31){$\widetilde \Sigma^+$}
\end{minipage}
\caption{Phase space of the vector fields (left)  $\widetilde X_{1}^-$ and (right) $\widetilde X_{1}^+$  defined for $u v\geq-1.$ On the dashed line both vector fields (left)  $\widetilde X_{1}^-$ and (right) $\widetilde X_{1}^+$ are discontinuous.}\label{f1}
\end{figure}

Consider the sections $\Sigma_D^1$ and $\Sigma_I^1$ on the hyperbola $u\,v+1=0,$ given by
\[
\begin{aligned}
\Sigma_D^1=&\{(u,v): v =-1/u: u_{i,1}^D \le u\le u_{f,1}^D\}\quad \text{and}\\
\Sigma_I^1=&\{(u,v): v =-1/u:  u_{i,1}^I \le u\le u_{f,1}^I\},
\end{aligned}
\]
where
\[
u_{i,1}^D= 1/2^{3/4}, \quad  u_{f,1}^D= 2, \quad u_{i,1}^I= -\frac{1+\sqrt{2}}{2^{3/4}} \quad \text{and} \quad  u_{f,1}^I=-1/2^{3/4} .
\]

We are going to show that the flow of  $\widetilde X_1^-$ induces a map
\begin{equation}\label{eq:P1}
P_1:\Sigma_D^1\to\Sigma_I^1.
\end{equation}
To do that, we will construct a compact connected trapping region $K^1$ such that $\Sigma_D^1\cup\Sigma_I^1 \subset \partial K^1$  and $\widetilde X_1^-$ points inward everywhere on $\partial K^1$ except at $\Sigma_I^1$ (see Figure \ref{trapping1}). Since $\widetilde X_1^-$ does not have singularities in $K^1$ and both vector fields  $\widetilde X_{1,L}^-$ and  $\widetilde X_{1,R}^-$ are transversal on $u=0$ and point to the left, applying the Poincar\'{e}-Bendixson Theorem (see, for instance, \cite[Theorem 1.25]{DLA06}) first in the region $K^1\cap\{u\geq0\}$ and then in the region $K^1\cap\{u\leq0\}$, we conclude that:
\begin{itemize}
\item[(C1)] for each point $p\in\Sigma_D^1$, there exists $t(p)>0$ such that $P_1(p):=\phi_1^-(t(p),p)\in \Sigma_I^1$.
\end{itemize}

Let $K^1$ (see Figure \ref{trapping1}) be the compact region delimited by
\[
\partial K^1=\Sigma_D^1\cup\Sigma_U^1\cup R_1 \cup \cdots \cup R_5,
\]
where
\[
\begin{aligned}
R_1=&\{(u,0), 0<u\le 5/2\},\\
R_2=&\{(u,v): v =2 u -5/2: u_{f,1}^D \le u \le 5/2\},\\
R_3=&\{(u,v): v =2 \sqrt{2} |u| -2^{7/4}: 0 \le u \le u_{i,1}^D\}, \\
R_4=& \{(u,v): v =2 \sqrt{2}  |u| -2^{7/4}: u_{i,1}^I\le  u <0\}, \\
R_5=&\{(u,v): v =2 \sqrt{2}  |u|:u_{f,1}^I \le  u \le  0\}.
\end{aligned}
\]

\begin{figure}[H]
\begin{overpic}[width=7cm]{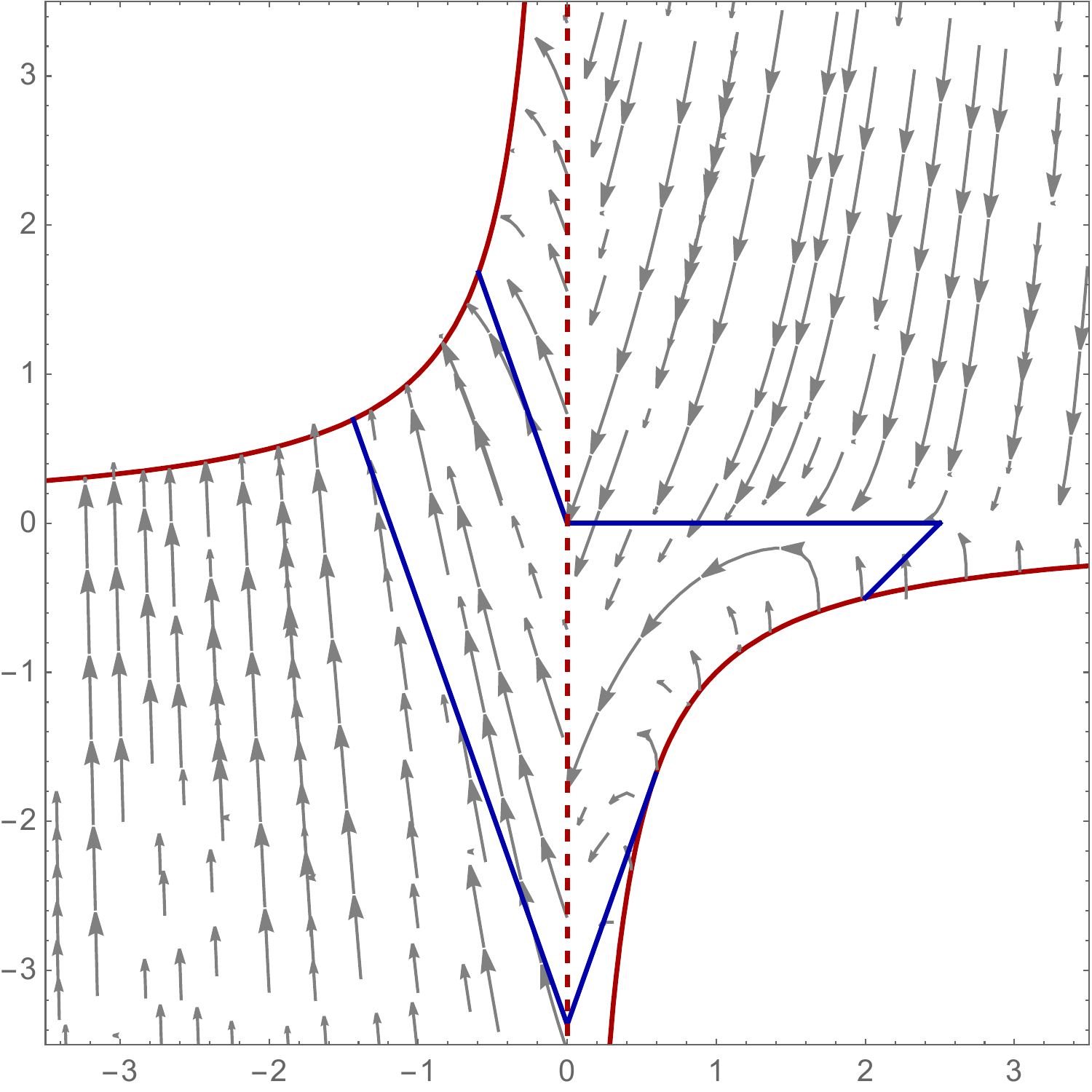}
\put(67,55){$R_1$}
\put(85,46){$R_2$}
\put(70,35){$\Sigma_D^1$}
\put(54,10){$R_3$}
\put(34,30){$R_4$}
\put(31,69){$\Sigma_I^1$}
\put(49,65){$R_5$}
\put(48,35){$K^1$}
	\end{overpic}
\caption{Phase space of the vector field  $\widetilde X_{1}^-$ defined for $u v\geq-1$ and the trapping region $K^1$.}
\label{trapping1}
\end{figure}

In what follows, we are going to analyze the behavior of the vector field $\widetilde X_{1}^-$ on each component of the boundary of $K^1$.

\noindent{\bf Behavior of $\widetilde X_1^{-}$ on $\Sigma_D^1$ and $\Sigma_I^1$.} The hyperbola $g(u,v)=u v + 1=0$  on $\Sigma_D^1$ satisfies
\[
\langle \nabla g(u,v),\widetilde X_1^{-}(u,v)\rangle|_{(u,v)\in \Sigma_D^1}= u,
\]
which does not vanish on $\Sigma_D^1$.  Note that the flow of the vector field $\widetilde X_1^{-}$ points inwards $K^1$ on $\Sigma_D^1$, because for instance $\widetilde X_1^-(1,-1)=(0,1)$.

The curve $g(u,v)=u v + 1=0$ on $\Sigma_I^1$ satisfies
\[
\langle \nabla g(u,v),\widetilde X_1^{-}(u,v)\rangle|_{(u,v)\in\Sigma_I^1} =u(1+8 u^4),
\]
which does not vanish on $\Sigma_I^1$. Note that the flow of the vector field $\widetilde X_1^{-}$ points outwards $K^1$ on $\Sigma_I^1$, because for instance $\widetilde X_1^-(-1,1)=(-2,7)$.

\bigskip

\noindent{\bf Behavior of $\widetilde X_1^{-}$ on  $R_1$.}
The curve $g(u,v)=v=0$ satisfies
\[
\begin{split}
\langle \nabla g(u,v),\widetilde X_1^{-}(u,v)\rangle|_{g(u,v)=0} &  = - 4  (1+u^4 -u^2 \sqrt{2 + u^4}).
\end{split}
\]
We will show that this derivative does not vanish for $u>0$. We proceed by contradiction. Assume that $1+u^4 -u^2 \sqrt{2 + u^4}=0$. Then,
\[
\sqrt{2 + u^4} = \frac{1+u ^4}{u^2} \quad \Leftrightarrow \quad 2 + u^4 = \left(\frac{1+u ^4}{u^2}\right)^2 = \frac{1+u^{8} + 2 u^4}{u^4} \quad \Leftrightarrow \quad -\frac{1}{u^4}=0,
\]
which is not possible. This shows that the flow along $R_1$ points always inwards $K^1$, because for instance $\widetilde X_1^-(1,0)= (1+\sqrt{3},-4 (2+\sqrt{3}))$.

\bigskip

\noindent{\bf Behavior of $\widetilde X_1^{-}$ on $R_2$.}
The curve $g(u,v)=v- u +5/2=0$ satisfies
\[
\langle \nabla g(u,v),\widetilde X_1^{-}(u,v)\rangle|_{g(u,v)=0} = -4 + \frac{25 }2 u  -6 u^2 - 4 u^4 + (4 u^2+1) \sqrt{2 - 5 u + 2 u^2 + u^4} .
\]
We will show that this derivative does not vanish on $R_2$ that is, for $u_{f,1}^D \le u \le 5/2$. We proceed by contradiction. Assume that it is zero at some $u$. Then, proceeding analogously to $R_1$ to get rid of the square root, we obtain
\[
\left( -4 + \frac{25 }2 u  -6 u^2 - 4 u^4 \right)^2 =  (1+4u^2)^2 (2 - 5 u + 2 u^2 + u^4),
\]
or equivalently
\[
p_0(u) := -14 + 95 u - \frac{745}4 u^2 + 110 u^3 - 19 u^4 + 20 u^5 - 8 u^6=0.
\]
Using the Sturm procedure  we conclude that the polynomial $p_0(u)$  does not vanish on $u_{f,1}^D  < u < 5/2$ which is a contradiction (see Appendix). Hence, the flow of the vector field  $\widetilde X_1^{-}$  points inwards $K^1$ on $R_2.$

\bigskip

\noindent{\bf Behavior of $\widetilde X_1^{-}$ on $R_3$ and $R_4$.} On the curve $g(u,v)=v-2 \sqrt{2} |u| -2^{7/4}=0$ we have: for $u>0$
\[
\begin{split}
\langle \nabla g(u,v),\widetilde X_1^{-}(u,v)\rangle|_{g(u,v)=0} &=p_1(u): =-4 + 10 \cdot 2^{3/4}  u-12 \sqrt{2}  u^2  - 4 u^4 \\
& \phantom{\le} + (4 u^2 + 2 \sqrt{2}) \sqrt{2 - 4 \cdot 2^{3/4} u + 4 \sqrt{2} u^2 + u^4},
\end{split}
\]
and for $u<0$,
\[
\begin{split}
\langle \nabla g(u,v),\widetilde X_1^{-}(u,v)\rangle|_{g(u,v)=0} &=p_{2}(u): = 4 - 10 \cdot 2^{3/4}  u-12 \sqrt{2}  u^2  + 4 u^4 \\
& \phantom{\le}  + (4 u^2 -  2 \sqrt{2}) \sqrt{2 - 4 \cdot 2^{3/4} u + 4 \sqrt{2} u^2 + u^4}.
\end{split}
\]
Of course on $u=0$ the above derivative is zero but since it is a quadratic tangency it is enough to show that $p_1(u)$ does not vanish  on $0 < u \le u_{i,1}^D$ and that the $p_{2}(u)$ does not vanish on $u_{i,1}^I \le u  <0$.

We proceed by contradiction. Assume first that $p_1(u)$ has a zero. Then, proceeding as in $R_1$ to get rid of the square root, we have that $p_1(u)=0$
implies
\[
p_3(u)=12 \cdot 2^{3/4} - 58 \sqrt{2} u + 88 \cdot 2^{1/4} u^2 - 38 u^3 + 4 \cdot 2^{3/4} u^4 - 4 \sqrt{2}  u^5=0.
\]
Using the Sturm procedure  we have that the polynomial $p_3(u)$  has a unique simple real zero on $u>0$ (see Appendix). Moreover, we have that
\[
p_1(0)=0, \quad   p_1'(0)=6 \cdot 2^{3/4}, \quad  p_1(u_{i,1}^D) = 1.
\]
These computations show that if $p_1(u)$ has a zero in $(0,u_{i,1}^D)$ then it has another (or a multiple) zero on that interval. This would produce another zero of $p_3(u)$ in $(0, \infty)$, which is a contradiction. Hence the flow of the vector field  $\widetilde X_1^{-}$ points inwards $K^1$ on $R_3$.

On the other hand assume that $p_2(u)$ has a zero. Then, proceeding as in $R_1$ to get rid of the square root, we have that $p_2(u)=0$ implies
\[
p_4(u)=12 \cdot 2^{3/4} - 42 \sqrt{2} u - 88 \cdot 2^{1/4} u^2 - 38 u^3 + 4 \cdot 2^{3/4} u^4 + 4 \sqrt{2}  u^5=0.
\]
Using the Sturm procedure  we conclude that the polynomial $p_4(u)$  has a unique simple real zero on $u<0$ (see Appendix). Moreover, we have that
\[
p_2(0)=0, \quad   p_2'(0)=-6 \cdot 2^{3/4}, \quad    p_2(u_{i,1}^I)= 38.5802...
\]
These computations show that if $p_2(u)$ has a zero in $(u_{i,1}^I,0)$, then either it is multiple or $p_2(u)$ has at least two negative zeros in contradiction with uniqueness of negative zeros provided by Sturm procedure.
Hence the flow of the vector field  $\widetilde X_1^{-}$ points inwards $K^1$ on  $R_4$.

\bigskip

\noindent{\bf Behavior of $\widetilde X^{-}$ on $R_5$.}
Note that on the curve $g(u,v)=v- 2\sqrt{2} |u| =0$ we have
\[
\langle \nabla g(u,v),\widetilde X_1^{-}(u,v)\rangle|_{g(u,v)=0} =p_5(u):= 4 - 12 \sqrt{2} u^2 + 4u^4 +(4 u^2 - 2 \sqrt{2})\sqrt{2 - 4 \sqrt{2} u^2 + u^4}.
\]
Of course on $u=0$ the above derivative is zero but since it is a quadratic tangency it is sufficient to show that this derivative does not vanish on $u_{f,1}^I < u < 0$.  We proceed by contradiction. Assume that it is zero. Then, proceeding as in the curve $R_1$ to get rid of the square root, we have that $p_5(u)=0$ implies
\[
\frac{2 u^2 (4 \sqrt{2} - 19 u^2 + 2 \sqrt{2}  u^4)}{ (\sqrt{2} - 2 u^2)^2}=0.
\]
Note that the denominator does not vanish on $ u_{f,1}^I < u < 0$, because the unique negative real solution of $\sqrt{2} - 2 u^2=0$, which is $-1/2^{1/4},$ is outside the mentioned interval. On the other hand the unique two negative real solutions of the numerator are
\[
u_1=-\frac 1 2  \left(\frac{19 \sqrt{2} - 3 \sqrt{66}}2\right)^{1/2}\quad \text{and} \quad  u_2=-\frac 1 2  \left(\frac {19 \sqrt{2} + 3 \sqrt{66}}2\right)^{1/2}.
\]
We observe that $u_1$ belongs to the interval $ u_{f,1}^I < u < 0$ while $u_2$ does not. However, $u_1$ is not a solution of $p_5(u)=0$ because
\[
p_5(u_1) = -\frac 3 8  \left(-78 +14 \sqrt{33} + 3 \sqrt{22 (19-3 \sqrt{33})} - 5 \sqrt{6 (19-3 \sqrt{33})} \right) \ne 0.
\]
This contradiction shows that the flow of the vector field  $\widetilde X_1^{-}$ points inwards $K^1$ on $R_5$.

\bigskip

\noindent{\bf Existence of $p_1^*$.} Consider the map $h_1 \colon \Sigma_D^1 \to \R$ defined by $h_1(p)=\pi (P_1(p)+p)$, where $\pi:\R^2\to\R$ is the projection onto the first coordinate (see (C1)). Note that by the continuous dependence of the flow of $\widetilde X_1^-$ with respect to the initial conditions the map $h_1$ is continuous. Moreover, the image of the point $(u_{f,1}^D,-1/u_{f,1}^D)$ by $P_1$ (which is the point $w_4$ in Figure \ref{fig.3q1}) is inside $\Sigma_I^1$, and  its symmetric is below its image because $u_{f,1}^D + u_{i,1}^I>0$. Hence $h_1(u_{f,1}^D,-1/u_{f,1}^D)>0$. On the other hand, the image of the point $(u_{i,1}^D,-1/u_{i,1}^D)$ by $P_1$ (which is the point $w_3$ in Figure \ref{fig.3q1}) is inside $\Sigma_I^1$, and  its symmetric is above its image because  $u_{i,1}^D +u_{f,1}^I<0$. Therefore $h_1(u_{i,1}^D,-1/u_{i,1}^D)<0$. Thus, by continuity, there exists $p_1^*\in\Sigma_D^1$ such that $h_1(p_1^*)=0$, i.e, $P_1(p_1^*)=-p_1^*$ and so $\phi_1^-(t^*_1,p_1^*)=-p_1^*$ as we wanted to prove.

\begin{figure}[H]
\begin{overpic}[width=7cm]{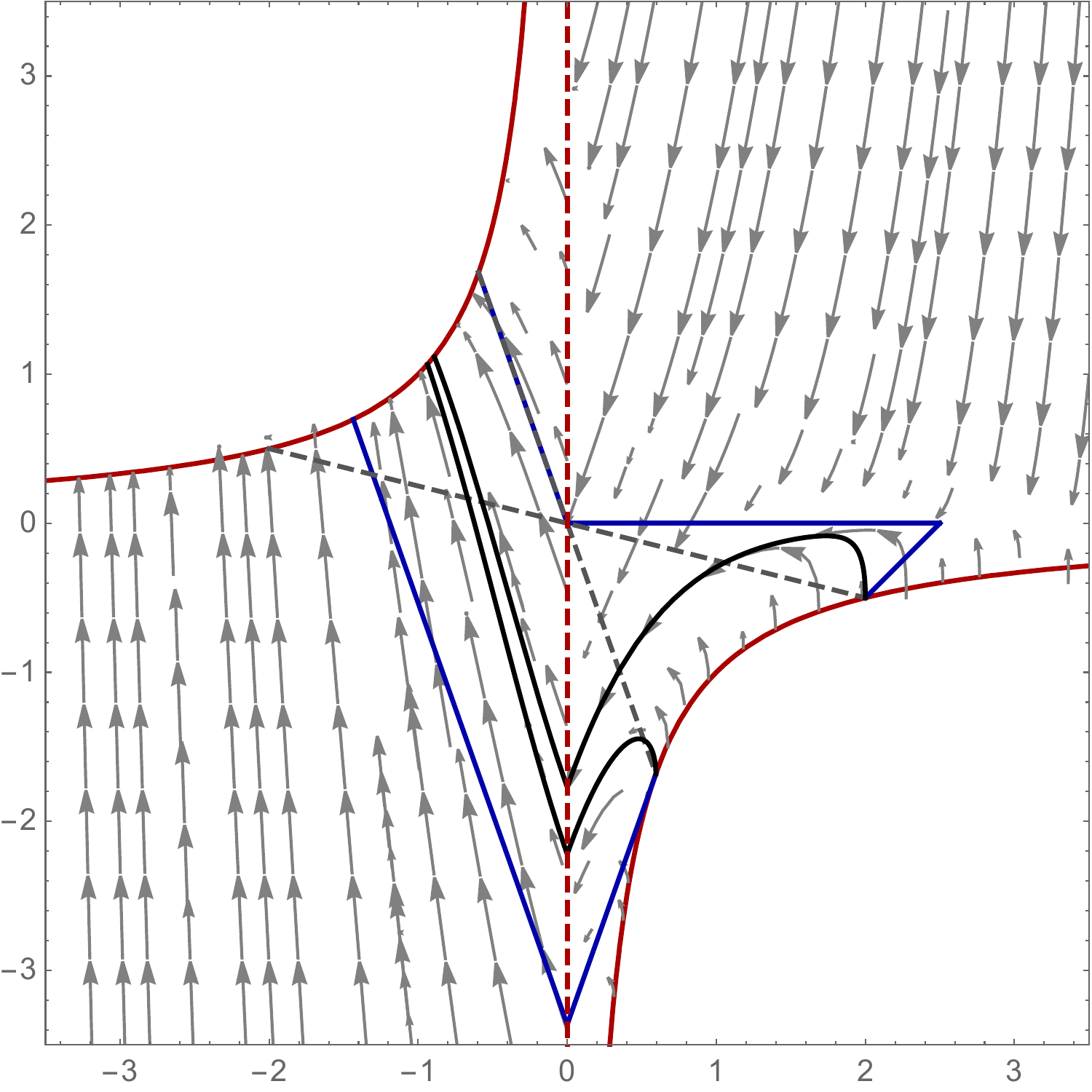}
\put(34,67.5){\footnotesize{$w_2$}}
\put(31,65.5){\footnotesize{$w_1$}}
\put(9,60){\footnotesize{$-u_{f,1}^D$}}
\put(31,78){\footnotesize{$-u_{i,1}^D$}}
\put(33,73){\footnotesize{$u_{f,1}^I$}}
\put(23,62){\footnotesize{$u_{i,1}^I$}}
\put(79,40){\footnotesize{$u_{f,1}^D$}}
\put(60,26){\footnotesize{$u_{i,1}^D$}}
	\end{overpic}
\caption{Idea of the obtention of the fixed point: $w_1 =P(u_{i,1}^D)$ and $w_2=P(u_{f,1}^D)$.}\label{fig.3q1}
\end{figure}

\bigskip

\subsection{ Proof of Theorem \ref{main} for the polynomial case $q=2$.} \label{sec:q2}

In this case, it follows from \eqref{eq:vector} that
\[
\widetilde X_2^{\pm}:
\left\{
\begin{aligned}
u'=&u^3 \pm\sqrt{u^{6}+2uv+2},\\
v'=&u\left(-u v-3 \left(u^3 \pm\sqrt{u^{6}+2uv+2}\right)^2\right),
\end{aligned}
\right.
\]
for $u v+1\geq0$. See Figure \ref{fig2} for the phase space of the vector field $\widetilde X_{2}^-$ (in the left) and  $\widetilde X_{2}^+$ (in the right).

\begin{figure}[H]
\begin{minipage}{0.45\linewidth}
\centering
\includegraphics[width=5.5cm]{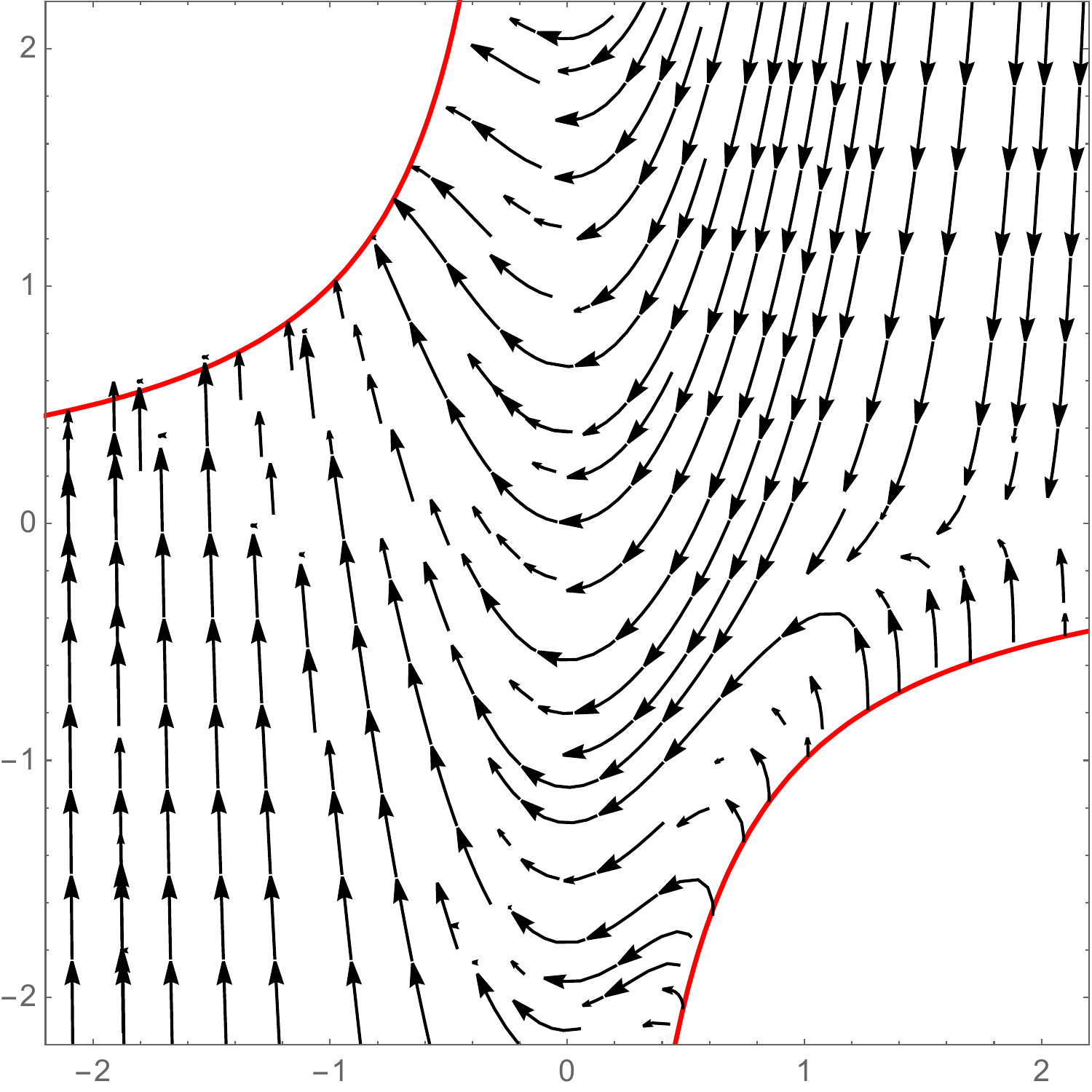}
\put(-120,124){$\widetilde \Sigma^-$}
\put(-45,31){$\widetilde \Sigma^+$}
\end{minipage}
\begin{minipage}{0.45\linewidth}
\centering
\includegraphics[width=5.5cm]{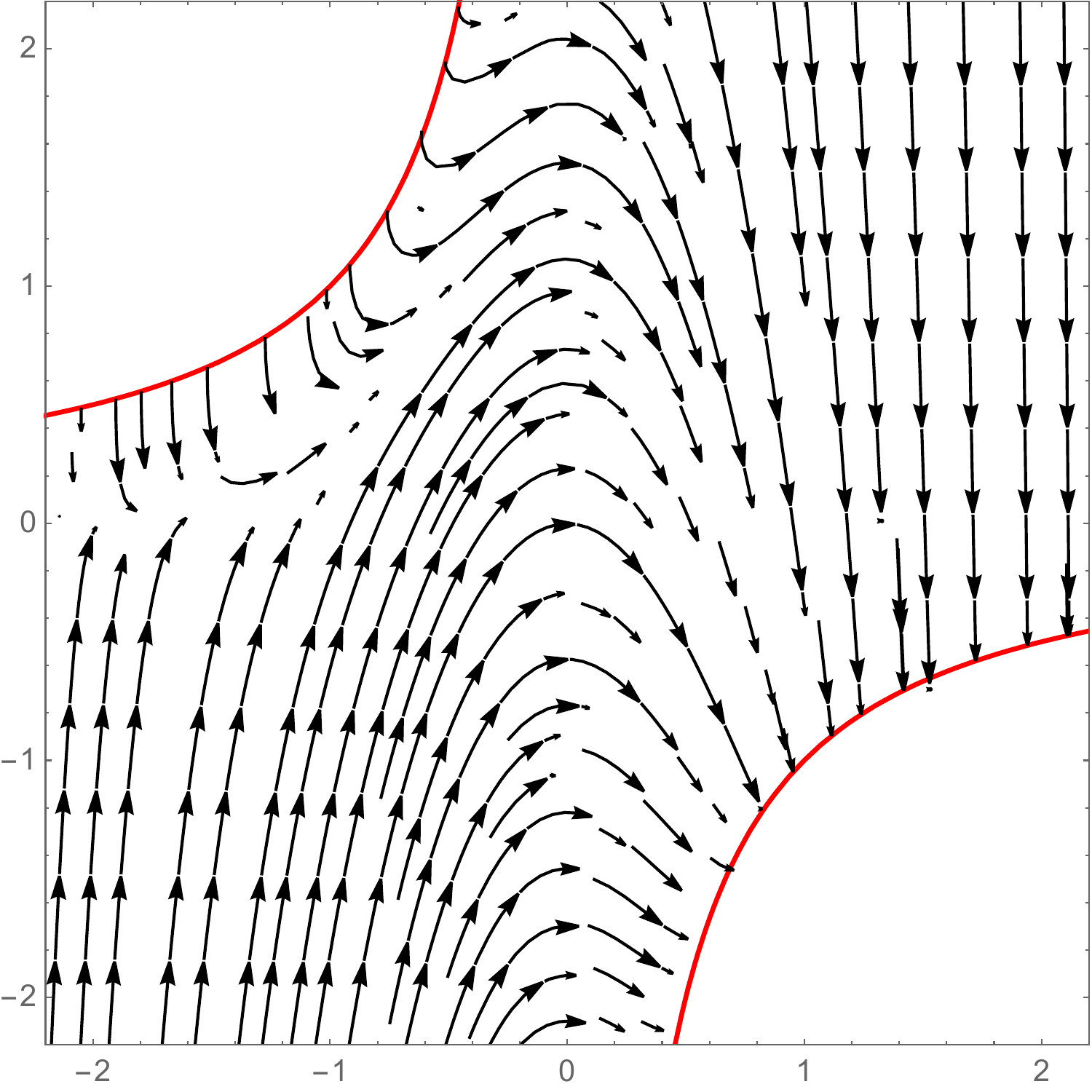}
\put(-120,124){$\widetilde \Sigma^-$}
\put(-45,31){$\widetilde \Sigma^+$}
\end{minipage}
\caption{Phase space of the vector fields (left)  $\widetilde X_{2}^-$ and (right) $\widetilde X_{2}^+$  defined for $u v\geq-1.$}\label{fig2}
\end{figure}

Consider the arcs $\Sigma_D^2$ and $\Sigma_I^2$ on the hyperbola $u\,v+1=0,$ given by
\[
\begin{aligned}
\Sigma_I^2=&\{(u,v): v =-1/u:  u_{i,2}^I \le u\le u_{f,2}^I\},
\end{aligned}
\]
where
\[
u_{i,2}^D= 2^{-1/6} 3^{-1/3}, \quad  u_{f,2}^D= 2, \quad u_{i,2}^I=  -2^{5/6}/3^{1/3} \quad \text{and} \quad  u_{f,2}^I= -2^{1/6}/3^{1/3}.
\]

As for the discontinuous case $q=1$ we are going to show that in this case the flow of  $\widetilde X_2^-$ induces a map
\begin{equation}\label{eq:P2}
P_2:\Sigma_D^2\to\Sigma_I^2,
\end{equation}
by constructing a trapping region $K^2$ such that $\Sigma_D^2\cup\Sigma_I^2\subset \partial K^2$ and $\widetilde X_2^-$ will point inwards $K^2$ everywhere on $\partial K^2$ except at $\Sigma_I^2$, see Figure \ref{trapping2}. Since $\widetilde X_2^-$ does not have singularities in $K^2$, from the Poincar\'{e}-Bendixson Theorem (see, for instance, \cite[Theorem 1.25]{DLA06}), we conclude that:
\begin{itemize}
\item[(C2)] for each point $p\in\Sigma_D^2$, there is $t(p)>0$ such that $P_2(p):=\phi_2^-(t(p),p)\in \Sigma_I^2$.
\end{itemize}

Let $K^2$ be the compact region delimited by
\[
\partial K^2=\Sigma_D^2\cup\Sigma_U^2\cup S_1 \cup \cdots \cup S_5,
\]
where
\[
\begin{aligned}
S_1=&\{(u,0), 0<u\le 9/4\},\\
S_2=&\{(u,v): v =2 u -9/2: u_{f,2}^D \le u \le 9/4\},\\
S_3=&\{(u,v): v =3 u^2/\sqrt{2} -3^{4/3}/2^{5/6}: 0 \le u \le u_{i,2}^D\}, \\
S_4=& \{(u,v): v =3 u^2/\sqrt{2} -3^{4/3}/2^{5/6}: u_{i,2}^I\le  u <0\}, \\
S_5=&\{(u,v): v =3 u^2/\sqrt{2}:u_{f,2}^I \le  u \le  0\}.
\end{aligned}
\]

\begin{figure}[H]
\begin{overpic}[width=7cm]{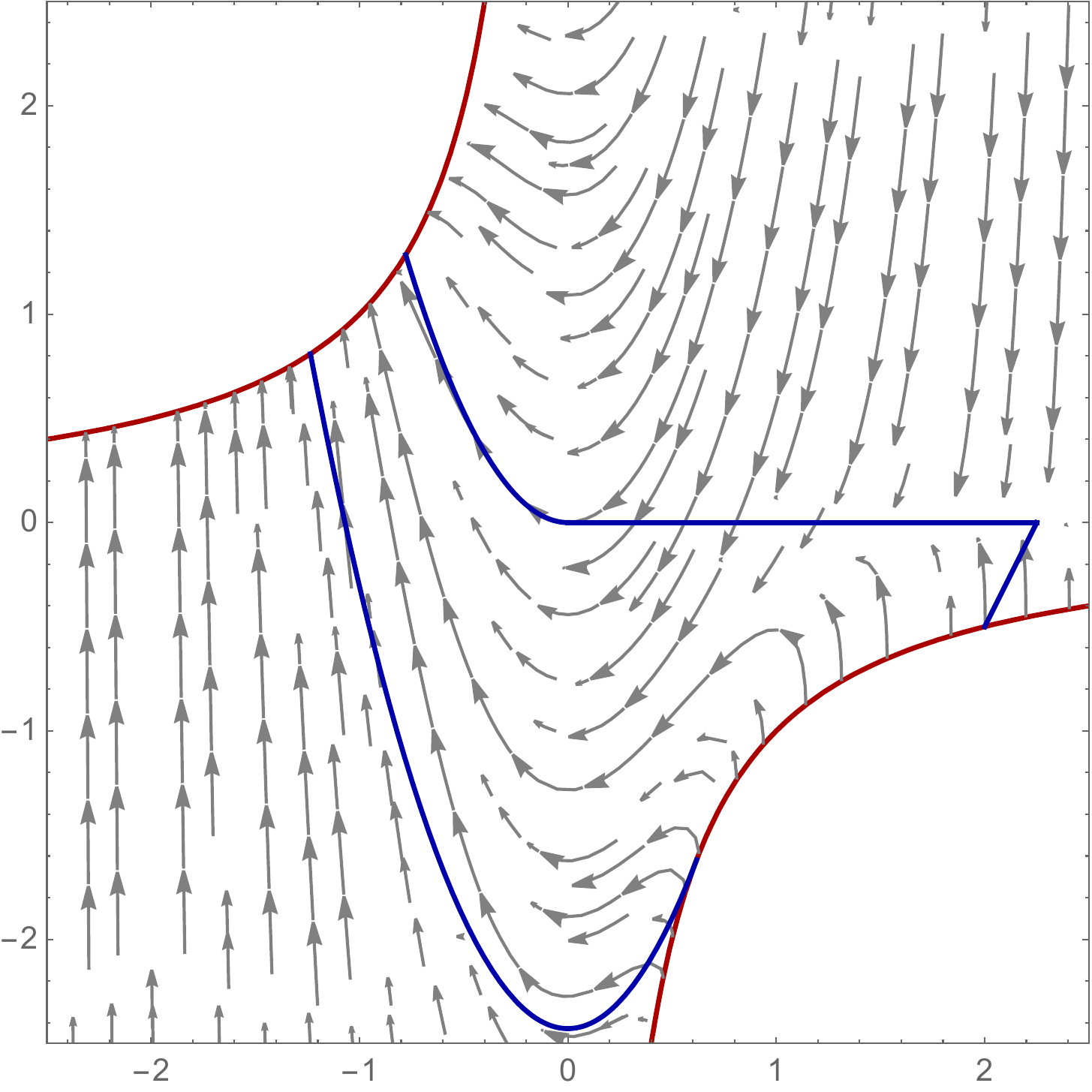}
\put(75,55){$S_1$}
\put(94,46){$S_2$}
\put(70,28){$\Sigma_D^2$}
\put(58,7){$S_3$}
\put(30,27){$S_4$}
\put(27,73){$\Sigma_I^2$}
\put(44,65){$S_5$}
\put(48,35){$K^2$}
	\end{overpic}
\caption{Phase space of the vector field  $\widetilde X_{2}^-$ defined for $u v\geq-1$ and the trapping region $K^2$.}\label{trapping2}
\end{figure}

In what follows, we are going to analyze the behavior of the vector field $\widetilde X_{2}^-$ on each component of the boundary of $K^2$.

\noindent{\bf Behavior of $\widetilde X_2^{-}$ on $\Sigma_D^2$ and $\Sigma_I^2$.}
The hyperbola $g(u,v)=u v + 1=0$  satisfies
\[
\langle\nabla g(u,v),\widetilde X_2^{-}\rangle |_{g(u,v)=0} = u^2,
\]
which does not vanish on $\Sigma_D^2\cup \Sigma_I^2$.  Note that the flow of the vector field $\widetilde X_2^{-}$ points inwards $K^2$ on $\Sigma_D^2\cup \Sigma_I^2$, because, for instance, $\widetilde X_2 ^-(1,-1)=(0,1)$ and $\widetilde X_2^-(-1,1)=(-2,11)$.

\bigskip

\noindent{\bf Behavior of $\widetilde X_2^{-}$ on $S_1$.} The curve $g(u,v)=v=0$  satisfies
\[
\begin{split}
\langle\nabla g(u,v),\widetilde X_2^{-}\rangle|_{g(u,v)=0} &  = 6 u (1+u^6 -u^3 \sqrt{2 + u^6}).
\end{split}
\]
We will show that this derivative does not vanish on $u>0$. We proceed by contradiction. Assume that $6 u (1+u^6 -u^3 \sqrt{2 + u^6})=0$. Then, since $u > 0$ we have $1+u^6 -u^3 \sqrt{2 + u^6}=0$, and so
\[
\sqrt{2 + u^6} = \frac{1+u ^6}{u^3} \quad \Leftrightarrow \quad 2 + u^6 = \left(\frac{1+u ^6}{u^3}\right)^2 = \frac{1+u^{12} + 2 u^6}{u^6} \quad \Leftrightarrow \quad -\frac{1}{u^6}=0,
\]
which is not possible. This shows that the flow of the vector field  $\widetilde X_2^{-}$ points inwards $K^2$ on $S_1$, because for instance $\widetilde X_2^-(1,0)=(-1-\sqrt{2},-3 (1+\sqrt{3}))$.

\bigskip

\noindent{\bf Behavior of $\widetilde X_2^{-}$ on $S_2$.}
The curve $g(u,v)=v-2 u +9/2=0$  satisfies
\[
\langle\nabla g(u,v),\widetilde X_2^{-}\rangle|_{g(u,v)=0} = -6 u + \frac{63}2u^2 - 16 u^3 - 6 u^7 + (2+6u^4)\sqrt{u^6 + (u-2) (4u-1 )}.
\]
We will show that this derivative does not vanish on $S_2$, i.e, for $u_{f,2}^D \le u \le 9/4$. We proceed by contradiction. Assume that it is zero at some $u$. Then, proceeding as in $S_1$ to get rid of the square root, we obtain
\[
( 6 u - \frac{63}2u^2 + 16 u^3 + 6 u^7)^2 =  (2+6u^4)^2 (u^6 + (-2 + u) (-1 + 4 u)),
\]
and so $u^2 p_6(u)=0$, where
\[
p_6(u) := 32  - 144 u - 80 u^2 + 1512 u^3 - 4545 u^4 + 3168 u^5 - 624 u^6 + 216 u^9 - 96 u^{10}.
\]
Using the Sturm procedure we obtain that the polynomial $p_6(u)$  does not vanish on $u_{f,2}^D  < u < 9/4$ which is a contradiction (see Appendix). Hence, the flow of the vector field  $\widetilde X_2^{-}$ points inwards $K^2$ on $S_2.$

\bigskip

\noindent{\bf Behavior of $\widetilde X_2^{-}$ on $S_3$ and $S_4$.}
Note that on $g(u,v)=v-3 u^2/\sqrt{2} -3^{4/3}/2^{5/6}=0$ we have
\[
\begin{split}
\langle\nabla g(u,v),\widetilde X_2^{-}\rangle|_{g(u,v)=0} & =-u \bigg( 6 - \frac{21 \cdot 3^{1/3} u}{2^{5/6}} + \frac{27 u^3}{\sqrt{2}} + 6 u^6 + (3 \sqrt{2}+ 6 u^3) \\
& \phantom{\le} \times \sqrt{2 - 3 \cdot 2^{1/6}  3^{1/3} u + 3 \sqrt{2} u^3 + u^6}\bigg).
\end{split}
\]
Of course on $u=0$ the above derivative is zero but since it is a quadratic tangency it is enough to show that this derivative does not vanish neither on $0 < u \le u_{i,2}^D$, nor on $u_{i,2}^I \le u  <0$. We proceed by contradiction. Assume that
\[
p_7(u):=6 - \frac{21 \cdot 3^{1/3} u}{2^{5/6}} + \frac{27 u^3}{\sqrt{2}} + 6 u^6 + (3 \sqrt{2}+ 6 u^3) \sqrt{2 - 3 \cdot 2^{1/6}  3^{1/3} u + 3 \sqrt{2} u^3 + u^6}
\]
has a zero. Then, proceeding as in $S_1$ to get rid of the square root, we have that $p_7(u)=0$ implies
\[
\begin{split}
p_8(u)& := 32 \cdot 2^{1/6}  3^{1/3} - 49 \cdot 2^{1/3} 3^{2/3} u - 16 \sqrt{2} u^2 + 78\cdot  2^{2/3}  3^{1/3} u^3 - 58 u^5 \\
&+ 8 \cdot 2^{1/6}  3^{1/3} u^6  - 8 \sqrt{2} u^8=0.
\end{split}
\]
Using the Sturm procedure  we show that the polynomial $p_8(u)$  has a unique simple real zero on $u>0$, and a unique simple real zero on $u<0$ (see Appendix). Moreover we have that
\[
p_7(0)=0, \,   p_7'(0)=0, \,  p''_7(0)=12 \cdot 2^{1/6} 3^{1/3}, \, p_7(u_{i,2}^D) = 0.617715, \, p_7(u_{i,2}^I)= 31.7094.
\]
These computations show that if $p_7(u)$ has a zero in $(0,u_{i,2}^D)$, then it has another (or a multiple) zero on that interval. This would produce another zero of $p_8(u)$ in $(0, \infty)$, which is a contradiction. On the other hand, if $p_7(u)$ has a zero in $(u_{i,2}^I,0)$ then it has another (or a multiple) zero on that interval and this would produce another zero of $p_8(u)$ in $(-\infty, 0)$, which is again a contradiction.
In short we have proved that the flow of the vector field  $\widetilde X_2^{-}$ points inwards $K^2$ on $S_3$ and $S_4$.

\bigskip

\noindent{\bf Behavior of $\widetilde X_2^{-}$ on $S_5$.}
The curve $g(u,v)=v- 3 u^2/\sqrt{2} =0$ satisfies
\[
\langle\nabla g(u,v),\widetilde X_2^{-}\rangle|_{g(u,v)=0} = p_9(u):=-\frac 3 2  u \Big(4 + 9 \sqrt{2} u^3 + 4 u^6 + (2\sqrt{2} + 4 u^3) \sqrt{2 + 3 \sqrt{2} u^3 + u^6}\Big).
\]
Of course on $u=0$ the above derivative is zero but since it is a quadratic tangency it is sufficient to show that this derivative does not vanish on $u_{f,2}^I < u < 0$. Proceeding as in $S_1$ to get rid of the square root, we have that $p_9=0$ implies
\[
\sqrt{2 + 3 \sqrt{2} u^3 + u^6} = \frac{4 + 9 \sqrt{2} u^3 + 4 u^6}{ 2\sqrt{2} + 4 u^3} \quad \Rightarrow \quad 2 + 3 \sqrt{2} u^3 + u^6 = \left(\frac{4 + 9 \sqrt{2} u^3 + 4 u^6}{ 2\sqrt{2} + 4 u^3}\right)^2,
\]
and so
\[
-\frac{u^3 (8 \sqrt{2} + 29 u^3 + 4 \sqrt{2} u^6)}{2 (\sqrt{2} + 2 u^3)^2}=0.
\]
Note that the denominator does not vanish on $ u_{f,2}^I < u < 0$, because the unique negative real solution of $\sqrt{2} + 2 u^3=0$, which is $-1/2^{1/6}$ is outside the mentioned interval. On the other hand the unique two real solutions of the numerator are
\[
u_1=-\frac 1 2  \left(\frac{29 \sqrt{2} - 3 \sqrt{130}}2\right)^{1/3}\quad \text{and} \quad  u_2=-\frac 1 2  \left(\frac {29 \sqrt{2} + 3 \sqrt{130}}2\right)^{1/3}.
\]
We observe that $u_1$ belongs to the interval $ u_{f,2}^I < u < 0$ while $u_2$ does not. However, $u_1$ is not a solution of $p_9(u)=0$ because
\[
p_9(u_1) = -\frac 1 2  \left(\frac{29 \sqrt{2} - 3 \sqrt{130}}2\right)^{1/3} \ne 0.
\]
This contradiction shows that the flow $\widetilde X_2^-$ point inwards $K^2$ on   $S_5$.

\bigskip

\noindent{\bf Existence of $p_2^*$.}
Consider the map $h_2 \colon \Sigma_D^2 \to \R$ defined by $h_2(p)=\pi (P_2(p)+p)$, where $\pi:\R^2\to\R$ is the projection onto the first coordinate (see (C2)). Note that by the continuous dependence of the flow of $\widetilde X_2^-$ with respect to the initial conditions the map $h_2$ is continuous. Moreover, the image of the point $(u_{f,2}^D,-1/u_{f,2}^D)$ by $P_2$ (which is the point $w_4$ in Figure \ref{fig.3q2}) is inside $\Sigma_I^2$, and  its symmetric is below its image because $u_{f,2}^D + u_{i,2}^I>0$. Hence $h_2(u_{f,2}^D,-1/u_{f,2}^D)>0$. On the other hand, the image of the point $(u_{i,2}^D,-1/u_{i,2}^D)$ by $P_2$ (which is the point $w_3$ in Figure \ref{fig.3q2}) is inside $\Sigma_I^2$, and  its symmetric is above its image because  $u_{i,2}^D +u_{f,2}^I<0$. Therefore $h_2(u_{i,2}^D,-1/u_{i,2}^D)<0$. Thus, by continuity, there exists $p_2^*\in\Sigma_D^2$ such that $h_2(p_2^*)=0$, i.e, $P_2(p_2^*)=-p_2^*$ and so $\phi_2^-(t^*_2,p_2^*)=-p_2^*$ as we wanted to prove.

\begin{figure}[H]
\begin{overpic}[width=7cm]{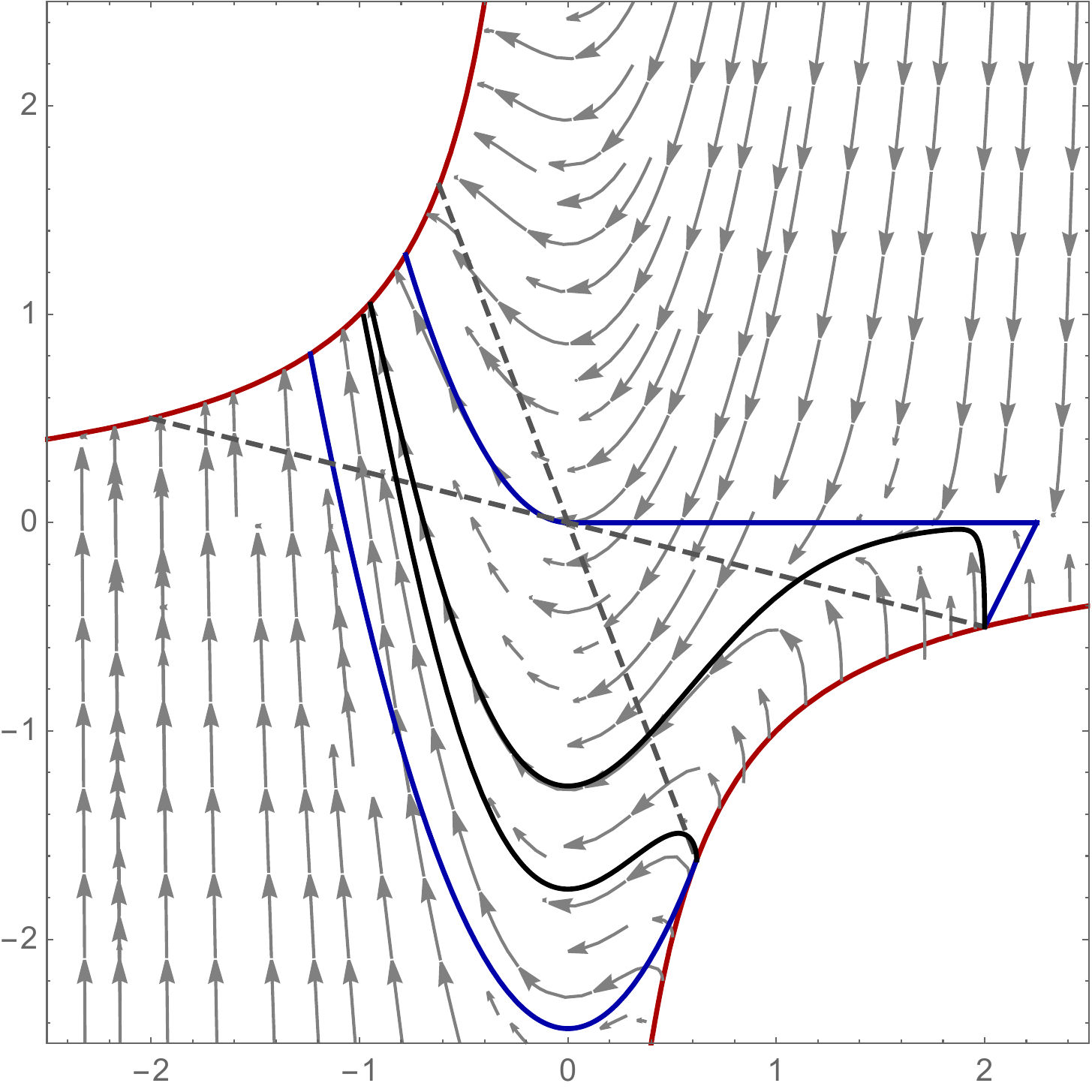}
\put(29,74.5){\footnotesize{$w_4$}}
\put(28,72.5){\footnotesize{$w_3$}}
\put(9,65){\footnotesize{$-u_{f,2}^D$}}
\put(31,85){\footnotesize{$-u_{i,2}^D$}}
\put(30,79){\footnotesize{$u_{f,2}^I$}}
\put(23,70){\footnotesize{$u_{i,2}^I$}}
\put(90,39){\footnotesize{$u_{f,2}^D$}}
\put(65,20){\footnotesize{$u_{i,2}^D$}}
	\end{overpic}
\caption{Idea of the obtention of the fixed point: $w_3 =P_2(u_{i,2}^D)$ and $w_4=P_2(u_{f,2}^D)$.}\label{fig.3q2}
\end{figure}

\bigskip

\subsection{ Proof of Theorem \ref{main} for the continuous case $q=3$}\label{sec:q3}

In this case, it follows from \eqref{eq:vector} that
\[
\widetilde X_2^{\pm}:
\left\{
\begin{aligned}
u'=&u \lvert u \rvert^3 \pm\sqrt{u^{8}+2uv+2},\\
v'=&\frac{\lvert u \rvert^3}{u} \left(-u v-4 \left(u \lvert u \rvert^3 \pm\sqrt{u^{8}+2uv+2}\right)^2\right).
\end{aligned}\right.
\]
for $u v+1\geq0$. See Figure \ref{fig3} for the phase spaces of the vector field $\widetilde X_{3}^-$ (in the left) and  $\widetilde X_{3}^+$ (in the right), defined for $u v\leq-1.$

\begin{figure}[H]\begin{minipage}{0.45\linewidth}
\centering
\includegraphics[width=5.5cm]{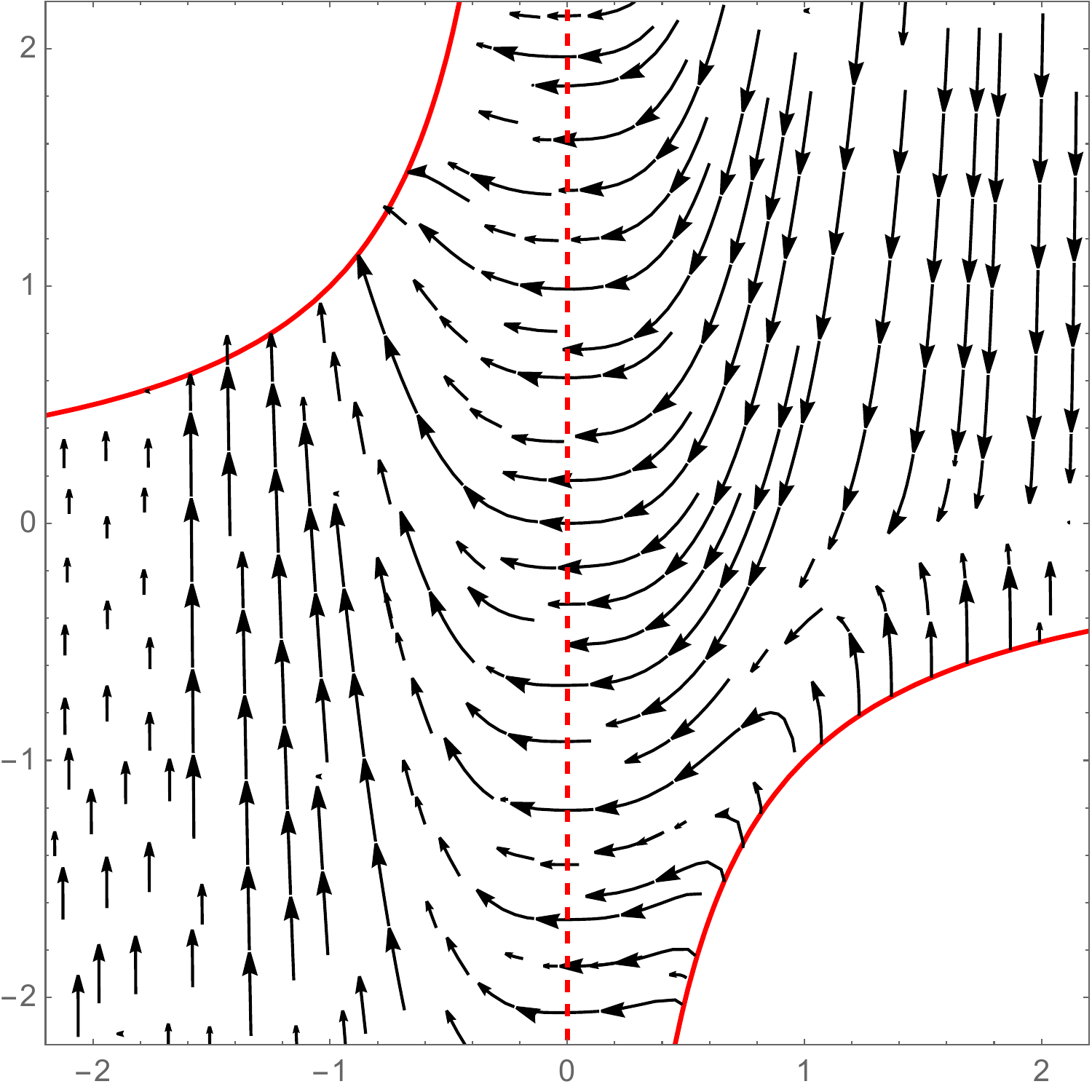}
\put(-120,124){$\widetilde \Sigma^-$}
\put(-45,31){$\widetilde \Sigma^+$}
\end{minipage}
\begin{minipage}{0.45\linewidth}
\centering
\includegraphics[width=5.5cm]{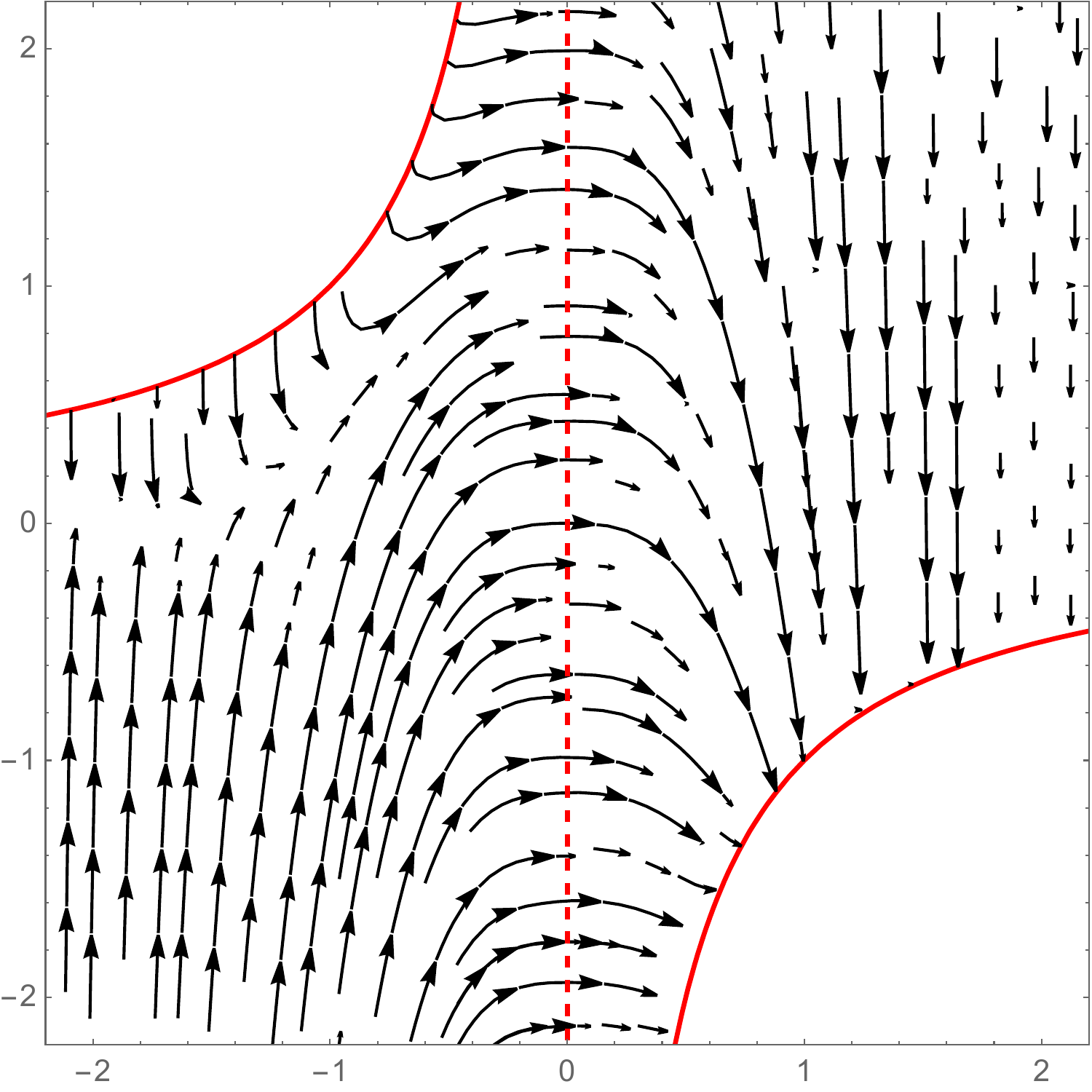}
\put(-120,124){$\widetilde \Sigma^-$}
\put(-45,31){$\widetilde \Sigma^+$}
\end{minipage}
\caption{Phase space of the vector fields (left)  $\widetilde X_{3}^-$ and (right) $\widetilde X_{3}^+$  defined for $u v\geq-1.$ On the dashed line both vector fields (left)  $\widetilde X_{3}^-$ and (right) $\widetilde X_{3}^+$ are discontinuous.}\label{fig3}
\end{figure}

Consider the sections $\Sigma_D^3$ and $\Sigma_I^3$ on the hyperbola $u\,v+1=0,$ given by
\[
\begin{aligned}
\Sigma_D^3=&\{(u,v): v =-1/u: u_{i,3}^D \le u\le u_{f,3}^D\}\quad \text{and}\\
\Sigma_I^3=&\{(u,v): v =-1/u:  u_{i,3}^I \le u\le u_{f,3}^I\},
\end{aligned}
\]
where
\[
u_{i,3}^D=\frac{1}{2^{5/8}} , \quad  u_{f,3}^D= 2, \quad u_{i,3}^I=\frac{N_{i,3}^I}{D_{i,3}^I}
\quad \text{and} \quad  u_{f,3}^I=-\frac{3^{1/4}}{2^{5/8}}
\]
being
\[
\begin{split}
D_{i,3}^I &= 2^{13/8}(1+\sqrt{2})^{1/3} \sqrt{1+\sqrt{2}-(1+\sqrt{2})^{1/3}}, \\
N_{i,3}^I &= -2-\sqrt{2}+\sqrt{2} (1+\sqrt{2})^{1/3}-\Big(-6-4 \sqrt{2}-2(1+\sqrt{2})^{2/3}+4 (1+\sqrt{2})^{4/3} \\
& \phantom{\le} + 4 (2+\sqrt{2})\sqrt{1+\sqrt{2}-(1+\sqrt{2})^{1/3}}\Big)^{1/2}.
\end{split}
\]

As for the discontinuous case $q=1$ we are going to show that in this case the flow of  $\widetilde X_3^-$ induces a map
\begin{equation}\label{eq:P3}
P_3:\Sigma_D^2\to\Sigma_I^3,
\end{equation}
by constructing a trapping region $K^3$ such that $\Sigma_D^3\cup\Sigma_I^3\subset \partial K^3$ and $\widetilde X_3^-$ will point inwards $K^3$ everywhere on $\partial K^3$ except at $\Sigma_I^3$, see Figure \ref{trapping3}. Since $\widetilde X_3^-$ does not have singularities in $K^3$, from the Poincar\'{e}-Bendixson Theorem (see, for instance, \cite[Theorem 1.25]{DLA06}),
\begin{itemize}
\item[(C3)] for each point $p\in\Sigma_D^3$, there is $t(p)>0$ such that $P_3(p):=\phi_3^-(t(p),p)\in \Sigma_I^3$.
\end{itemize}

Let $K^3$ be the compact region delimited by
\[
\partial K^3=\Sigma_D^3\cup\Sigma_U^3\cup T_1 \cup \cdots \cup T_5,
\]
 where
\[
\begin{aligned}
T_1=&\{(u,0), 0<u\le 13/6\},\\
T_2=&\{(u,v): v =3 u -13/2: u_{f,3}^D \le u \le 13/6\},\\
T_{3}=&\{(u,v): v =4 \sqrt{2} |u|^3/3 -2^{21/8}/3: 0 \le u \le u_{i,3}^D\}, \\
T_{4}=&\{(u,v): v =4 \sqrt{2} |u|^3/3 -2^{21/8}/3: u_{i,3}^I \le u \le 0\}, \\
T_5=&\{(u,v): v =4 \sqrt{2} |u|^3/3 :u_{f,3}^I \le  u \le  0\}.
\end{aligned}
\]

\begin{figure}[H]
\begin{overpic}[width=7cm]{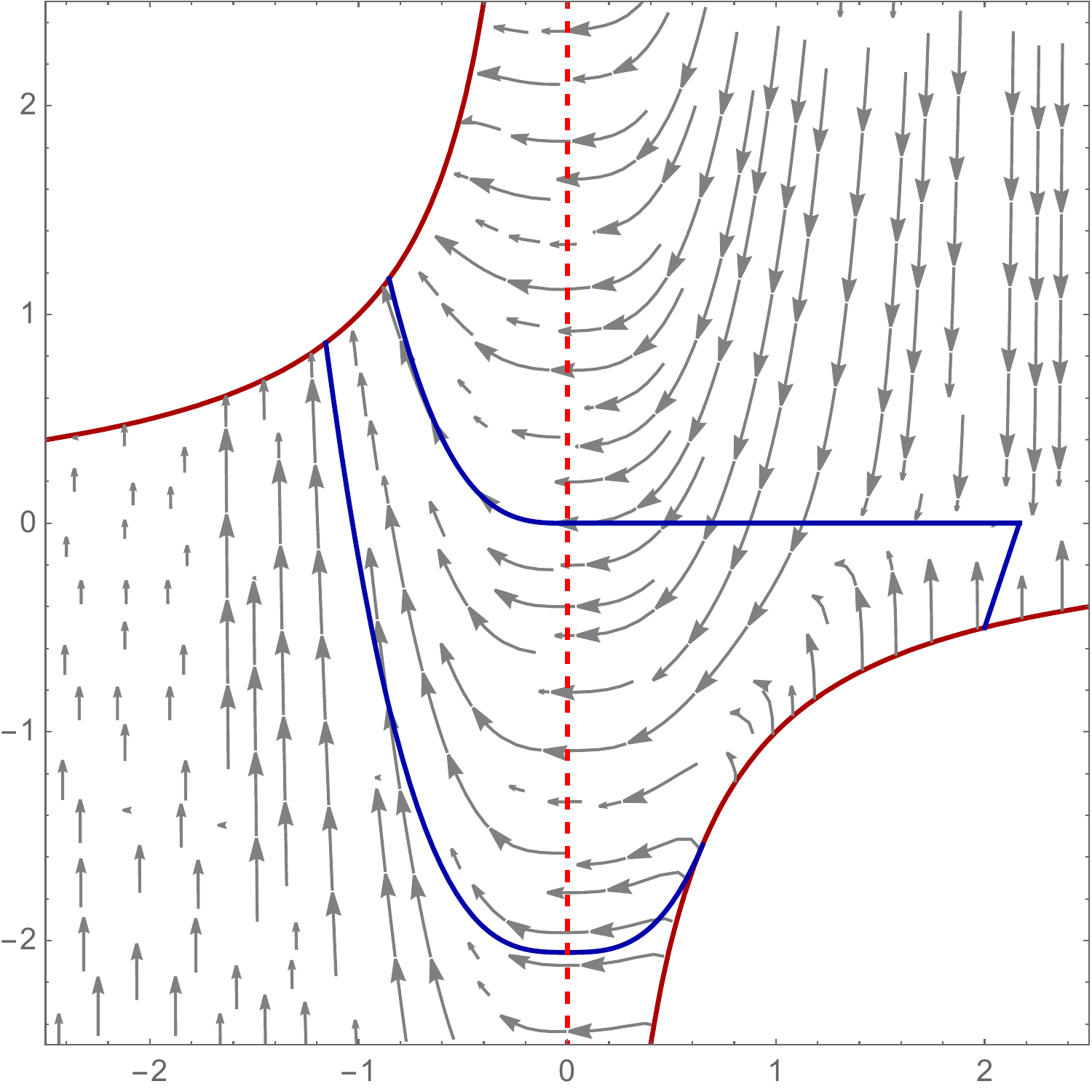}
\put(70,55){$T_1$}
\put(94,46){$T_2$}
\put(70,28){$\Sigma_D^3$}
\put(56,10){$T_3$}
\put(31,25){$T_4$}
\put(27,74){$\Sigma_I^3$}
\put(40,65){$T_5$}
\put(48,35){$K^3$}
	\end{overpic}
\caption{Phase space of the vector field  $\widetilde X_{3}^-$ defined for $u v\geq-1$ and the trapping region $K^3$.}\label{trapping3}
\end{figure}

In what follows, we are going to analyze the behavior of the vector field $\widetilde X_{3}^-$ on each component of the boundary of $K^3$.

\noindent{\bf Behavior of $\widetilde X_3^{-}$ on $\Sigma_D^3$ and $\Sigma_I^3$.}
The hyperbola $g(u,v)=u v + 1=0$ satisfies
\[
\langle\nabla g(u,v),\widetilde X_3^{-}\rangle|_{(u,v)\in\Sigma_D^3} = u^3,
\]
which does not vanish on $\Sigma_D^3$.  Note that the flow of the vector field $\widetilde X_3^{-}$ points inwards $K^3$ on $\Sigma_D^3$, because for instance $\widetilde X_1^-(1,-1)=(0,1)$.

The curve $g(u,v)=u v + 1=0$ on $\Sigma_I^3$ satisfies
\[
\langle\nabla g(u,v),\widetilde X_3^{-}\rangle|_{(u,v)\in \Sigma_I^3} =u^3 (1 + 16 u^8),
\]
which does not vanish on $\Sigma_I^3$.  Note that the flow of the vector field $\widetilde X_3^{-}$ points outwards $K^3$ on $\Sigma_I^3$, because for instance $\widetilde X_3^-(-1,1)=(-2,15)$.

\bigskip

\noindent{\bf Behavior of $\widetilde X_3^{-}$ on $T_1$.} The curve $g(u,v)=v=0$ on $u>0$ satisfies
\[
\begin{split}
\langle\nabla g(u,v),\widetilde X_3^{-}\rangle|_{g(u,v)=0} &  =- 8 u^2 \big((1+u^8) - u^4 \sqrt{2 + u^8} \big).
\end{split}
\]
We will show that this derivative does not vanish on $u>0$. We proceed by contradiction. Assume that $6 u (1+u^6 -u^3 \sqrt{2 + u^6})=0$. Then, since $u > 0$ we have $(1+u^8) - u^4 \sqrt{2 + u^8}=0$, and so
\[
\sqrt{2 + u^8}= \frac{1+u^8}{u^4} \ \Leftrightarrow \ 2 + u^8 = \left(\frac{1+u ^8}{u^4}\right)^2 = \frac{1+u^{16} + 2 u^8}{u^8} \ \Leftrightarrow \ \frac{1+4 u^8 + 2 u^{16}}{u^8}=0,
\]
which is not possible. This shows that the flow of the vector field  $\widetilde X_3^{-}$ points inwards $K^3$ on $T_1$, because for instance $\widetilde X_2^-(1,0)=(1-\sqrt{3},-16+8\sqrt{3})$.

\bigskip

\noindent{\bf Behavior of $\widetilde X_3^{-}$ on $T_2$.}
The curve $g(u,v)=v-3 u +13/6=0$  satisfies
\[
\langle\nabla g(u,v),\widetilde X_3^{-}\rangle|_{g(u,v)=0} = -8 u^2 +\frac{117}2 u^3 -30 u^4 -8 u^{10} + (8 u^6+3)\sqrt{2 + u (-13 + 6 u + u^7)}.
\]
We will show that this derivative does not vanish on $T_2$, i.e. for $u_{f,3}^D \le u \le 13/6$. We proceed by contradiction. Assume that it is zero at some $u$. Then, proceeding as in $T_1$ to get rid of the square root, we obtain
\[
p_{10}(u) := 72 - 468 u + 216 u^2 - 256 u^4 + 3744 u^5 - 15225 u^6 + 11544 u^7 - 2412 u^8 + 416 u^{13} - 192 u^{14}.
\]
Using the Sturm procedure we prove that the polynomial $p_{10}(u)$  does not vanish on $u_{f,3}^D  < u < 13/6$ which is a contradiction (see Appendix). Hence, the flow of the vector field  $\widetilde X_3^{-}$ points inwards $K^3$ on $T_2.$

\bigskip

\noindent{\bf Behavior of $\widetilde X_3^{-}$ on $T_3$ and $T_4$.}
On the curve $g(u,v)=v-4 \sqrt{2} |u|^3/3 -2^{21/8}/3=0$ for $u>0$ we have
\[
\begin{split}
\langle\nabla g(u,v),\widetilde X_3^{-}\rangle|_{g(u,v)=0} &=p_{11}(u): =-u^2 \Big(8 - 3 \cdot 2^{21/8} u +16 \sqrt{2} u^4 + 8 u^8 \\
& \phantom{\le} - 4 ( \sqrt{2}+2 u^4) \sqrt{2 - \frac{ 2^{29/8}}3  u + \frac{8 \sqrt{2}}3 u^4 + u^8} \Big),
\end{split}
\]
and for $u<0$,
\[
\begin{split}
\langle\nabla g(u,v),\widetilde X_3^{-}\rangle|_{g(u,v)=0} &=p_{12}(u): =u^2 \Big(8 - 3 \cdot 2^{21/8} u -16 \sqrt{2} u^4 + 8 u^8 \\
& \phantom{\le} - 4 ( \sqrt{2}-2 u^4) \sqrt{2 - \frac{ 2^{29/8}}3  u - \frac{8 \sqrt{2}}3 u^4 + u^8} \Big).
\end{split}
\]
Of course on $u=0$ the above derivative is zero but since it is a quadratic tangency it is enough to show that $p_{11}(u)$ does not vanish  on $0 < u \le u_{i,3}^D$, and that the $p_{12}(u)$ does not vanish on $u_{i,3}^I \le u  <0$.

We proceed by contradiction. Assume first that $p_{11}(u)$ has a zero. Then, proceeding as in  $R_1$ to get rid of the square root, we have that $p_{11}(u)=0$ implies
\[
p_{13}(u)=20 \cdot 2^{5/8} - 54 \cdot 2^{1/4} u - 8 \sqrt{2} u^3 + 80 \cdot 2^{1/8} u^4 - 26 u^7 + 4 \cdot 2^{5/8}  u^8 - 4 \sqrt{2} u^{11}=0.
\]
Using the Sturm procedure  we obtain that the polynomial $p_{13}(u)$  has a unique simple real zero on $u>0$ (see  Appendix). Moreover, we have that
\[
p_{11}(0)=0, \quad   p_{11}'(0)=0, \quad p_{11}''(0)=0, \quad  p_{11}'''(0)=40 \cdot 2^{5/8}, \quad p_1(u_{i,3}^D) =0.420448...
\]
These computations show that if $p_{11}(u)$ has a zero in $(0,u_{i,3}^D)$, then it has another (or a multiple) zero on that interval. This would produce another zero of $p_{13}(u)$ in $(0, \infty)$, which is a contradiction. Hence the flow of the vector field  $\widetilde X_3^{-}$ points inwards $K^3$ on  $T_3$.

On the other hand assume that $p_{12}(u)$ has a zero. Then, proceeding as in $T_1$ to get rid of the square root, we have that $p_{12}(u)=0$ implies
\[
p_{14}(u)=20 \cdot 2^{5/8} - 54 \cdot 2^{1/4} u + 8 \sqrt{2} u^3 - 80 \cdot 2^{1/8} u^4 - 26 u^7 + 4 \cdot 2^{5/8} u^8 + 4 \sqrt{2} u^{11}=0.
\]
Using the Sturm procedure  we have that the polynomial $p_{14}(u)$  has a unique simple real zero on $u<0$ (see  Appendix). Moreover, we have that
\[
p_{12}(0)=0, \quad   p_{12}'(0)=0, \quad p_{12}''(0)=0, \quad p_2'''(0)=-40 \cdot 2^{5/8}, \quad    p_2(u_{i,3}^I)= 40.3062...
\]
These computations show that if $p_{12}(u)$ has a zero in $(u_{i,3}^I,0)$, then either it is multiple or $p_{12}(u)$ has at least two negative zeros in contradiction with uniqueness of negative zeros provided by Sturm procedure. So the flow of the vector field  $\widetilde X_3^{-}$ points inwards $K^3$ on  $T_4$.

\bigskip

\noindent{\bf Behavior of $\widetilde X_3^{-}$ on $T_5$.}
On the curve $g(u,v)=v- 4 \sqrt{2} |u|^3/3 =0$ we have
\[
\langle\nabla g(u,v),\widetilde X_3^{-}\rangle|_{g(u,v)=0}  = 4u^2\left(2 \left(u^8-2 \sqrt{2} u^4+1\right) +\left(2 u^4- \sqrt{2}\right) \sqrt{u^8-\frac{8 \sqrt{2} u^4}{3}+2}\right).
\]
Of course on $u=0$ the above derivative is zero but since it is a quadratic tangency it is to show that this derivative does not vanish on $u_{f,3}^I < u < 0$. Proceeding as in $T_1$ to get rid of the square root, we have that $p_{15}(u):=6-12 \sqrt{2} u^4 + 6 u^8 (\sqrt{6}-2\sqrt{3} u^4)  \sqrt{6 - 8 \sqrt{2} u^4 + 3 u^8}=0$ implies
\[
\frac{2 u^4 (4 \sqrt{2} - 13 u^4 + 2 \sqrt{2} u^8)}{3 (\sqrt{2} - 2 u^4)^2}=0.
\]
Note that the denominator does not vanish on $ u_{f,3}^I < u < 0$, because the unique negative real solution of $\sqrt{2} - 2 u^4=0$, which is $-1/2^{1/8}$ is outside the mentioned interval. On the other hand the unique two real solutions of the numerator are
\[
u_1=-\frac 1 {2^{3/4}} \big(13 \sqrt{2} -  \sqrt{210}\big)^{1/4}\quad \text{and} \quad  u_2=-\frac 1 {2^{3/4}} \big(13 \sqrt{2} + \sqrt{210}\big)^{1/4}.
\]
We observe that $u_1$ belongs to the interval $ u_{f,3}^I < u < 0$ while $u_2$ does not. However, $u_1$ is not a solution of $p_{15}(u)=0$ because
\[
p_{15}(u_1) = -\frac 1 {2^{3/4}}  \big(77 \sqrt{5} -37 \sqrt{21} \big) \ne 0.
\]
This contradiction shows that the flow $\widetilde X_3^-$ point inwards $K^3$ on $T_5$.

\bigskip

\noindent{\bf Existence of $p_3^*$.} Consider the map $h_3 \colon \Sigma_D^3 \to \R$ defined by $h_3(p)=\pi (P_3(p)+p)$, where $\pi:\R^2\to\R$ is the projection onto the first coordinate (see (C3)). Note that by the continuous dependence of the flow of $\widetilde X_3^-$ with respect to the initial conditions the map $h_3$ is continuous. Moreover, the image of the point $(u_{f,3}^D,-1/u_{f,3}^D)$ by $P_3$ (which is the point $w_6$ in Figure \ref{fig.3q3}) is inside $\Sigma_I^3$, and  its symmetric is below its image because $u_{f,3}^D + u_{i,3}^I>0$. Hence $h_3(u_{f,3}^D,-1/u_{f,3}^D)>0$. On the other hand, the image of the point $(u_{i,3}^D,-1/u_{i,3}^D)$ by $P_3$ (which is the point $w_5$ in Figure \ref{fig.3q3}) is inside $\Sigma_I^3$, and  its symmetric is above its image because  $u_{i,3}^D +u_{f,3}^I<0$. Therefore $h_2(u_{i,3}^D,-1/u_{i,3}^D)<0$. Thus, by continuity, there exists $p_3^*\in\Sigma_D^2$ such that $h_3(p_3^*)=0$, i.e, $P_3(p_3^*)=-p_3^*$ and so $ \phi_3^-(t^*_3,p_3^*)=-p_3^*$ as we wanted to prove.

\begin{figure}[H]
\begin{overpic}[width=7cm]{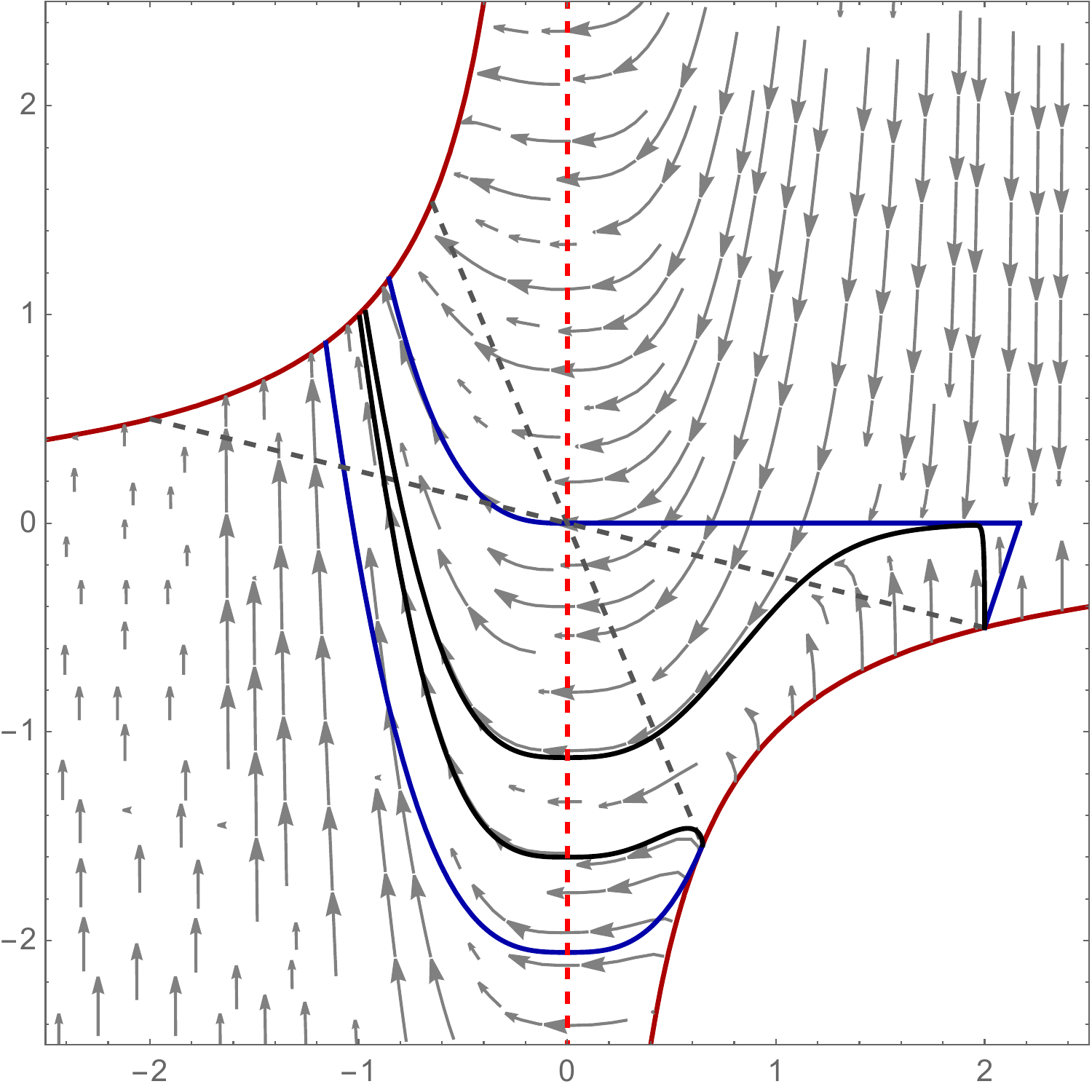}
\put(29,74.5){\footnotesize{$w_6$}}
\put(28,72.5){\footnotesize{$w_5$}}
\put(9,65){\footnotesize{$-u_{f,2}^D$}}
\put(31,85){\footnotesize{$-u_{i,2}^D$}}
\put(30,79){\footnotesize{$u_{f,2}^I$}}
\put(23,70){\footnotesize{$u_{i,2}^I$}}
\put(90,39){\footnotesize{$u_{f,2}^D$}}
\put(65,20){\footnotesize{$u_{i,2}^D$}}
	\end{overpic}
\caption{Idea of the obtention of the fixed point: $w_5 =P_3(u_{i,3}^D)$ and $w_6=P_3(u_{f,3}^D)$.}\label{fig.3q3}
\end{figure}

\section{Proof of Theorem \ref{main-general}}\label{sec:q-arbitrary}

This section is devoted to the proof of Theorem \ref{main-general}. This proof will follow from propositions \ref{anlyitcproof}, \ref{proofU2}, \ref{proofU3}, and  \ref{proofU4}.

Consider the sections $\Sigma_D^q$ and $\Sigma_I^q$ on the hyperbola $u\,v+1=0,$ given by
\[
\begin{aligned}
\Sigma_D^q=&\{(u,v): v =-1/u: u_{i,q}^D \le u\le u_{f,q}^D\}\quad \text{and}\\
\Sigma_I^q=&\{(u,v): v =-1/u:  u_{i,q}^I \le u\le u_{f,q}^I\},
\end{aligned}
\]
where
\[
u_{i,q}^D=2^{-\frac{1}{2(1+q)}}(1+q)^{-\frac{1}{1+q}}, \quad  u_{f,q}^D= 2,
\quad  u_{f,q}^I=-\left(\frac{\sqrt{2}(1+q)}{q}\right)^{-1/(1+q)},
\]
and $ u_{i,q}^I=-x_{i,q}^I$, being $x_{i,q}^I$ the unique positive real root of the polynomial
\[
P(x)=-q - 2^{\frac{1}{2(1+q)}}(1+q)^{\frac{2+q}{1+q}} x + \sqrt{2} (1+q) x^{1+q}.
\]
Indeed, in the next result we show that $P(x)$ has a unique positive real zero $x_{i,q}^I$ and that $x_{i,q}^I <  2$.

\begin{lemma}
The polynomial $P(x)$ has a unique positive real zero $x_{i,q}^I$. In addition, $x_{i,q}^I < u_{f,q}^D= 2$.
\end{lemma}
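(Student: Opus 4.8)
The plan is to treat the two assertions separately with elementary tools. \emph{Uniqueness of the positive zero.} First note that $P(0)=-q<0$ while the leading coefficient $\sqrt2\,(1+q)$ is positive, so $P(x)\to+\infty$ as $x\to+\infty$ and $P$ has at least one positive real zero by the intermediate value theorem. For there being at most one, I would apply Descartes' rule of signs: listing the nonzero coefficients of $P$ by increasing degree gives $-q$, then the coefficient $-2^{\frac{1}{2(1+q)}}(1+q)^{\frac{2+q}{1+q}}$ of $x$, then the coefficient $\sqrt2\,(1+q)$ of $x^{1+q}$, with all intermediate coefficients zero; the sign pattern $(-,-,+)$ has a single sign change, so $P$ has exactly one positive real zero. (Equivalently, $P''(x)=\sqrt2\,q(1+q)^2x^{q-1}>0$ on $(0,\infty)$, so $P$ is strictly convex there, and a continuous strictly convex function that is negative at $0$ and unbounded above has precisely one positive zero.) Denote this zero by $x_{i,q}^I$.

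\emph{The estimate $x_{i,q}^I<u_{f,q}^D=2$.} Since $P(0)<0$ and the positive zero is unique, it suffices to show $P(2)>0$; the intermediate value theorem then locates the zero in $(0,2)$. The main step is therefore the inequality
\[
P(2)=\sqrt2\,(1+q)\,2^{1+q}-2^{\,1+\frac{1}{2(1+q)}}(1+q)^{\,1+\frac{1}{1+q}}-q>0 .
\]
To prove it I would bound the subtracted terms uniformly in $q$: since $q\ge1$ one has $2^{\frac{1}{2(1+q)}}\le2^{1/4}$, and since $t\mapsto t^{1/t}$ is maximized on $[2,\infty)$ at $t=3$ (its logarithmic derivative $(1-\ln t)/t^2$ being negative for $t>e$) one has $(1+q)^{\frac{1}{1+q}}\le3^{1/3}$; together with $q\le1+q$ this yields
\[
P(2)\ \ge\ (1+q)\Bigl(\sqrt2\,2^{1+q}-1-2^{5/4}3^{1/3}\Bigr).
\]
The bracket is positive for all $q\ge1$ because $\sqrt2\,2^{1+q}\ge2^{5/2}=4\sqrt2>5.6$ while $1+2^{5/4}3^{1/3}<4.5$. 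This gives $P(2)>0$ and hence $x_{i,q}^I<2$.

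I do not anticipate a genuine obstacle; the argument is wholly elementary. The only point requiring a little care is the uniform control, independent of $q$, of the quantities $2^{\frac{1}{2(1+q)}}$ and $(1+q)^{\frac{1}{1+q}}$ occurring in $P(2)$: they must be bounded by absolute constants so that the term $\sqrt2\,(1+q)\,2^{1+q}$, which grows like $2^q$, dominates for every $q\ge1$. Once those bounds are recorded, everything reduces to the single numerical inequality $4\sqrt2>1+2^{5/4}3^{1/3}$.
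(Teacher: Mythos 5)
Your proof is correct, and while the uniqueness part coincides with the paper's (Descartes' rule of signs applied to the sign pattern $(-,-,+)$, supplemented by $P(0)=-q<0$ and the positive leading coefficient; your convexity remark is a clean alternative), your treatment of the inequality $P(2)>0$ is genuinely different and noticeably simpler. The paper verifies $q=1$ by hand and, for $q\ge 2$, rewrites $P(2)>0$ as $\bigl(2^q-\tfrac{q}{2^{3/2}(q+1)}\bigr)^{q+1}>2^{-q/2}(q+1)$ and then proves two auxiliary monotonicity claims by differentiating in $q$ — a multi-step argument with a case split. You instead bound the two $q$-dependent radicals by absolute constants, $2^{\frac{1}{2(1+q)}}\le 2^{1/4}$ and $(1+q)^{\frac{1}{1+q}}\le 3^{1/3}$, factor out $(1+q)$, and reduce everything to the single numerical inequality $4\sqrt2>1+2^{5/4}3^{1/3}$, valid uniformly for $q\ge1$ with no case analysis. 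One small inaccuracy to fix: the real-variable function $t\mapsto t^{1/t}$ is maximized on $[2,\infty)$ at $t=e$, not at $t=3$; your bound $(1+q)^{\frac{1}{1+q}}\le 3^{1/3}$ is nevertheless correct because $1+q$ is an integer $\ge 2$ (check $t=2$ directly, and use the stated monotonicity for $t\ge3>e$), or you may simply replace $3^{1/3}$ by $e^{1/e}<1.445$, after which the final numerical comparison still goes through. With that wording repaired, your argument is a complete and more economical proof of the lemma.
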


\begin{proof}
First, from Descartes rule of signs, $P(x)$ has at most one positive real zero. Let us see that it has exactly one positive real zero. Indeed, since
\[
P'(x)=- 2^{\frac{1}{2(1+q)}}(1+q)^{\frac{2+q}{1+q}}  + \sqrt{2} (1+q)^2 x^{q}
\]
and, for $x > 2$, $x^q > 2^{\frac{1}{2(1+q)}}$ and $(1+q)^2 > (1+q)^{\frac{2+q}{1+q}}$, it follows that $P(x)$ is increasing for $x > 2$. Since $P(0)=-q < 0$, then $P(x)$ has exactly one positive real zero $x_{i,q}^I$.

Now, let us check that $x_{i,q}^I<2$ by  showing that $P(2)>0$. Note that
\[
P(2)=-q + 2^{\frac{3+2q}{2}} (1+q)-2^{\frac{3+2q}{2(1+q)}} (1+q)^{\frac{2+q}{1+q}}.
\]
First, for $q=1$, $P(2)=-1 - 8 2^{1/6} + 8 \sqrt{2}>0$. Now, for $q \ge 2$, one can see that  $P(2)>0 $ if, and only if,
\begin{equation}\label{eq:efe}
 \Big(2^q-\frac{q}{2^{\frac{3}2}(q+1)} \Big)^{q+1}  > 2^{-q/2} (q+1).
\end{equation}
We claim
\[
2^q-\frac{q}{2^{\frac{3}2}(q+1)} > 2 \quad \text{and} \quad 2^{q+1}>2^{-q/2} (q+1).
\]
The relation in \eqref{eq:efe} will then follow from the claim.

In order to get the claim, we compute the derivative in the variable $q$  as follows:
\[
\dfrac{d}{dq}\Big(2^q-\frac{q}{2^{\frac{3}2}(q+1)} -2\Big) = \frac{q}{32(1+q)^2} -\frac{1}{32(1+q)} +2^q \log 2 > \frac{q}{32(1+q)^2}>0
\]
for any $q \ge 2$. Since, for $q=2$, we get
\[
2^2-\frac{2}{2^{\frac{3}2}3} -2 =1.97>0,
\]
the first assertion in the claim follows. For the second one, taking again derivatives in $q$ , we obtain
\[
\dfrac{d}{dq}\big(2^{q+1}-2^{-q/2} (q+1)\big)= 2^{1+q} \log 2 - 2^{-\frac{2+q}2} (2-(1+q)\log 2).
\]
Note that, for $q > 1$, we have $2-(1+q)\log 2  < \log 2$ and, since $2^{-\frac{2+q}2} < 2^{1+q}$, it follows that the derivative above is positive for $q >1$. Finally, since for $q=1$ we get $2^2-2^{1/2}>0$, then the second assertion of the claim follows. Hence, the proof of the lemma is concluded. 
\end{proof}

From here, by following the strategy developed for the cases $q=1,2,3$, the idea is to show that the flow of  $\widetilde X_q^-$ induces a transition map
\begin{equation}\label{eq:P3}
P_q:\Sigma_D^q\to\Sigma_I^q.
\end{equation}
As before, this will be done by constructing a region $K^q$ such that $\Sigma_D^q\cup\Sigma_I^q\subset \partial K^q$ and $\widetilde X_q^-$ points inwards $K^q$ everywhere on $\partial K^q$ except at $\Sigma_I^q$ (see Figure \ref{trapping4}). Since $\widetilde X_q^-$ does not have singularities in $K^q$, from the Poincar\'{e}-Bendixson Theorem (see, for instance, \cite[Theorem 1.25]{DLA06}), we  conclude that for each point $p\in\Sigma_D^q$, there is $t(p)>0$ such that $P_q(p):=\phi_q^-(t(p),p)\in \Sigma_I^q$.

By imitating the construction of the compact regions in cases $q=1,2,3$, we consider $K^q$ the compact region (see Figure \ref{trapping4}) delimited by
\[
\partial K^q=\Sigma_D^q\cup\Sigma_I^q\cup U_1 \cup \cdots \cup U_5,
\]
 where
\[
\begin{aligned}
U_1=&\{(u,0), 0<u\le (1+4q)/(2q)\},\\
U_2=&\{(u,v): v =q u -(1+4q)/2: u_{f,q}^D < u \le (1+4q)/(2q)\},\\
U_{3}=&\{(u,v): v =(1+q) \sqrt{2} |u|^q/q -2^{\frac{1}{2(1+q)}} (1+q)^{\frac{2+q}{1+q}}/q: 0 \le u < u_{i,q}^D\}, \\
U_{4}=&\{(u,v): v =(1+q) \sqrt{2} |u|^q/q -2^{\frac{1}{2(1+q)}} (1+q)^{\frac{2+q}{1+q}}/q: u_{i,q}^I < u \le 0\}, \\
U_5=&\{(u,v): v =(1+q) \sqrt{2} |u|^q/q :u_{f,q}^I \le  u \le  0\}.
\end{aligned}
\]

\begin{figure}[H]
\begin{overpic}[width=7cm]{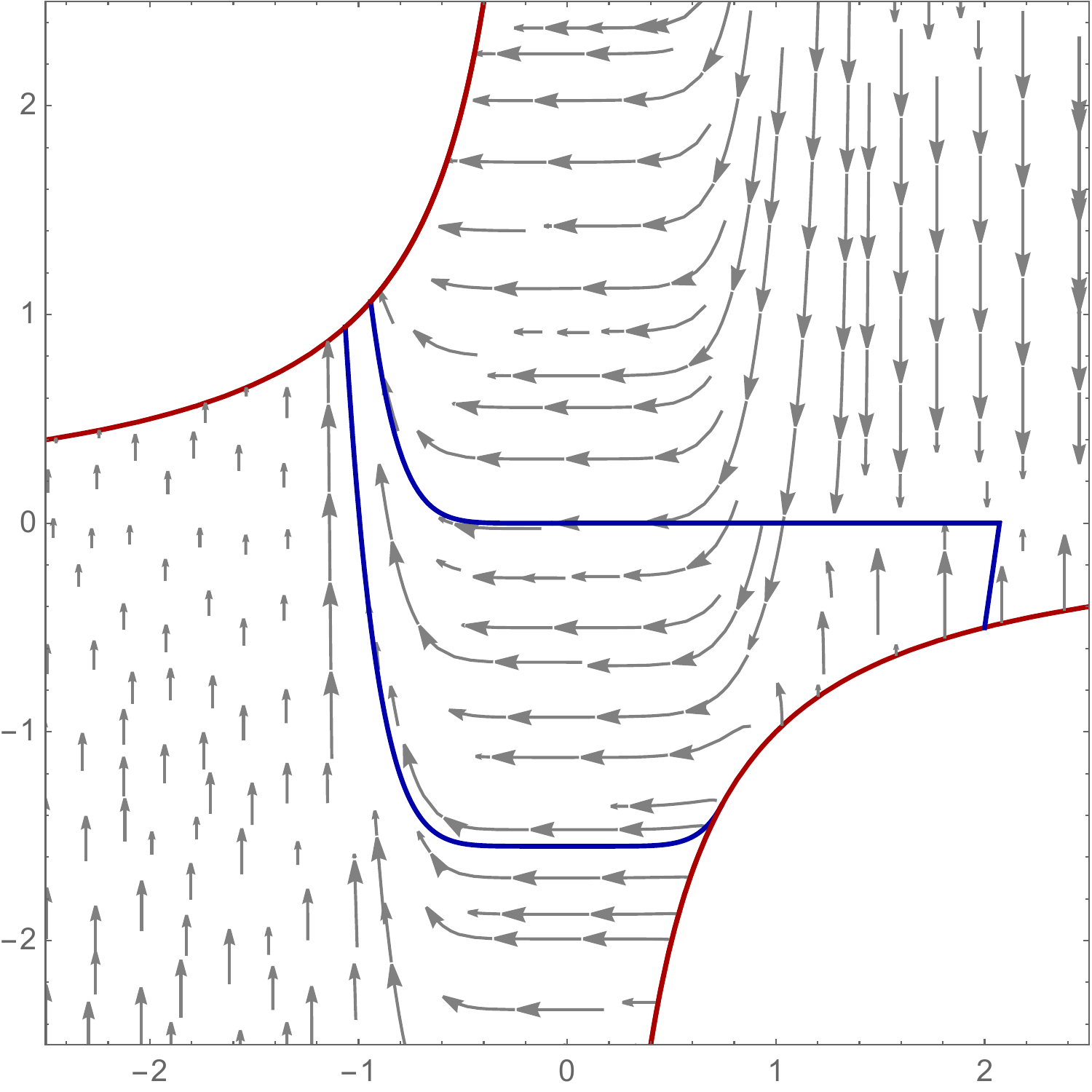}
\put(60,55){$U_1$}
\put(92,46){$U_2$}
\put(74,31){$\Sigma_D^q$}
\put(47,17){$U_3$}
\put(26,42){$U_4$}
\put(27,74){$\Sigma_I^q$}
\put(36,62){$U_5$}
\put(48,35){$K^q$}
	\end{overpic}
\caption{Phase space of the vector field  $\widetilde X_{q}^-$ defined for $u v\geq-1$ and the region $K^q$ for $q=7$.}\label{trapping4}
\end{figure}

In what follows, we are going to analyze the behavior of the vector field $\widetilde X_{q}^-$ on each component of the boundary of $K^q$. Summarizing, in Proposition \ref{anlyitcproof}, we will prove analytically that, for every positive integer $q$ , the flow of the vector field $\widetilde X_q^{-}$ points inwards $K^q$ on $\Sigma_D^q$, $U_1$, and $U_5$, and outwards $K^q$ on $\Sigma_I^q$; for the remaining curves, $U_2$, $U_3$, and $U_4$, propositions \ref{proofU2}, \ref{proofU3}, and  \ref{proofU4} will establish, respectively, that, for every positive integer $q$ , $\widetilde X_q^{-}$ points inwards $K^q$ on them provided that the conditions {\bf C1},  {\bf C2}, and {\bf C3} of Theorem \ref{main-general} hold.

\begin{proposition}\label{anlyitcproof}
The flow of the vector field $\widetilde X_q^{-}$ points inwards $K^q$ on $\Sigma_D^q$, $U_1$, and $U_5$, and outwards $K^q$ on $\Sigma_I^q$.
\end{proposition}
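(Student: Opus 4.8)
The plan is to treat the four boundary pieces one at a time, in each case computing the sign, along $\{g=0\}$, of the inner product $\langle\nabla g,\widetilde X_q^{-}\rangle$, where $g$ denotes the defining function of the piece under consideration --- exactly as was done for $q=1,2,3$. All the computations are made tractable by a single observation: since $u^{2(1+q)}+2uv+2=u^{2(1+q)}+2(uv+1)$, on the hyperbola $uv+1=0$ the radical $\sqrt{u^{2(1+q)}+2uv+2}$ collapses to $|u|^{1+q}$, and more generally $u^{2(1+q)}+2uv+2\ge0$ throughout the domain of $\widetilde X_q^{-}$.

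First I would dispose of the three pieces on which the answer is immediate. For the two arcs of the hyperbola, $g=uv+1$ and $\nabla g=(v,u)$; on $\Sigma_D^q$ we have $u>0$, so $u|u|^q=|u|^{1+q}$ and the simplification forces $u'=0$ and $v'=u^{q-1}$, whence $\widetilde X_q^{-}\equiv(0,u^{q-1})$ on $\Sigma_D^q$ and $\langle\nabla g,\widetilde X_q^{-}\rangle=u^{q}>0$; on $\Sigma_I^q$ we have $u<0$, so $u|u|^q=-|u|^{1+q}$, which gives $u'=2u|u|^q$ and, after simplification, $\langle\nabla g,\widetilde X_q^{-}\rangle=-|u|^{q}\bigl(1+4(q+1)u^{2(1+q)}\bigr)<0$. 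As $K^q$ is contained in the domain $\{uv+1\ge0\}$ of $\widetilde X_q^{-}$ and both arcs lie on $\{uv+1=0\}$, the flow strictly increases $uv+1$ along $\Sigma_D^q$, so it enters $K^q$, and strictly decreases $uv+1$ along $\Sigma_I^q$, so it leaves $K^q$ (equivalently, this is read off from $\widetilde X_q^{-}(1,-1)=(0,1)$ and $\widetilde X_q^{-}(-1,1)=(-2,4q+3)$). For $U_1$, $g=v$ and on $\{v=0,\ u>0\}$ one has $u'=u^{1+q}-\sqrt{u^{2(1+q)}+2}<0$, which never vanishes (it would force $0=2$), so $\langle\nabla g,\widetilde X_q^{-}\rangle=v'=-(q+1)u^{q-1}(u')^{2}<0$ and the flow decreases $v$, entering $K^q$ (which lies below $U_1$).

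The only piece demanding real work is $U_5$. Here I would substitute $s=-u>0$ and $b=s^{1+q}$, so that on $U_5$ one has $0<b<b_*$ with $b_*:=q/\bigl(\sqrt2\,(1+q)\bigr)<1/\sqrt2$, the right endpoint $b_*$ being exactly the value for which $U_5$ meets $\Sigma_I^q$ (that is, $|u_{f,q}^{I}|^{1+q}=b_*$). A direct computation, using the radical simplification once more, gives
\[
\langle\nabla g,\widetilde X_q^{-}\rangle\big|_{U_5}=(q+1)\,s^{q-1}\bigl[A(b)+B(b)\sqrt{E(b)}\,\bigr],
\]
where
\[
A(b)=2b^{2}-\frac{3\sqrt2\,(1+q)}{q}\,b+2,\qquad B(b)=2b-\sqrt2,\qquad E(b)=b^{2}-\frac{2\sqrt2\,(1+q)}{q}\,b+2\ \ (\,\ge0\,),
\]
and $B(b)<0$ on $U_5$ because $2b<2b_*<\sqrt2$. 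To show the bracket does not vanish on $(0,b_*)$ I would isolate the radical and square: $A+B\sqrt E=0$ forces $B^{2}E-A^{2}=0$; on expanding, the degree-$4$ terms cancel and the constant term vanishes, so $B^{2}E-A^{2}=b\,Q(b)$ for an explicit quadratic $Q$ (depending on $q$, with leading coefficient $4\sqrt2/q\ne0$). Since $b>0$ this forces $Q(b)=0$, and the two roots of $Q$ are given in closed form by the quadratic formula: those lying outside $(0,b_*)$ are irrelevant, and for any root inside $(0,b_*)$ a direct substitution into $A+B\sqrt E$ shows it to be nonzero --- a spurious root introduced by the squaring, precisely as with $p_5(u_1)\ne0$ in the case $q=1$. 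Hence the bracket has constant sign on the open arc $u_{f,q}^{I}<u<0$ (the endpoint $u=0$ being a tangency, as in the cases $q=1,2,3$); evaluating it at one interior point, in agreement with $q=1,2,3$, shows the sign is negative, so the flow decreases $g$ there and enters $K^q$ (which lies below $U_5$). The remaining pieces $U_2,U_3,U_4$ are treated in the subsequent propositions and are the ones that require the hypotheses {\bf C1}, {\bf C2}, {\bf C3}.

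I expect the whole difficulty to sit in the $U_5$ step: unlike the other three pieces, whose directional derivatives come out manifestly signed, here the radical blocks a direct sign reading, and one must exclude, uniformly in $q$, the spurious zeros of $A+B\sqrt E$ on $(0,b_*)$ created by squaring. The facts I would pin down first are the coincidence $|u_{f,q}^{I}|^{1+q}=b_*$, which locates the relevant interval, and the elementary bound $b_*<1/\sqrt2$, which is what makes $B(b)<0$ throughout $U_5$.
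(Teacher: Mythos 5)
Your proposal is correct and follows essentially the same route as the paper: the same piece-by-piece computation of $\langle\nabla g,\widetilde X_q^{-}\rangle$ on $\Sigma_D^q$, $\Sigma_I^q$, $U_1$, $U_5$, and on $U_5$ the same squaring argument that reduces the vanishing of the directional derivative to the roots of a quadratic in $|u|^{1+q}$ (your $Q(b)$ is, up to the factor $q^2/2$, exactly the paper's numerator $2\sqrt2\,q\,b^2-(9+10q)b+4\sqrt2\,q$, with roots matching the paper's $u_\pm$), followed by exclusion of the spurious root via direct substitution. Your presentation is marginally cleaner in places — using the collapse of the radical to $|u|^{1+q}$ on the hyperbola, writing $v'=-(q+1)u^{q-1}(u')^2$ on $U_1$, and the substitution $b=|u|^{1+q}$ — but these are streamlinings of the identical argument, not a different proof.
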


\begin{proof}
We first prove the proposition on $\Sigma_D^q$ and $\Sigma_I^q$. Note that the hyperbola $g(u,v)=u v + 1=0$ satisfies
\[
\begin{aligned}
&\langle\nabla g(u,v),\widetilde X_q^{-}\rangle|_{(u,v)\in\Sigma_D^q} = u^q,\\
&\langle\nabla g(u,v),\widetilde X_q^{-}\rangle|_{(u,v)\in \Sigma_I^q} =(-1)^{q+1}(1 + 4 (1 + q) u^{2(1+q)}) u^q,
\end{aligned}
\]
which does not vanish on $\Sigma_D^q$ and $\Sigma_I^q$, respectively. Since, for instance,  $\widetilde X_q^-(1,-1)=(0,1)$ and $\widetilde X_q^-(-1,1)=(-2,3+4q)$, we conclude that the flow of the vector field $\widetilde X_q^{-}$ points inwards $K^q$ on $\Sigma_D^q$ and $\Sigma_I^q$.

Now, consider the curve $U_1$ described by $g(u,v)=v=0$, $u>0$. Note that
\[
\begin{split}
\langle\nabla g(u,v),\widetilde X_q^{-}\rangle|_{g(u,v)=0} &  =-2 (1 + q) u^{q-1} \big(1 + u^{2(1+q)} - u^{1+q} \sqrt{2 + u^{2(1+q)}}\big).
\end{split}
\]
We will show that this derivative does not vanish on $u>0$. We proceed by contradiction. Assume that $1 + u^{2(1+q)} - u^{1+q} \sqrt{2 + u^{2(1+q)}}=0$. Then,
\[
\sqrt{2 + u^{2(1+q)}}= \frac{1+u^{2(1+q)}}{u^{1+q}} \ \Leftrightarrow \ 2 + u^{2(1+q)} = \left(\frac{1+u^{2(1+q)}}{u^{1+q}}\right)^2\ \Leftrightarrow \  -\frac{1}{u^{2(1+q)}}=0,
\]
which is not possible. Thus, the flow of the vector field  $\widetilde X_q^{-}$ points inwards $K^q$ on $U_1$.

Finally, consider the curve $U_5$ described by $g(u,v)=v- (1+q) \sqrt{2} |u|^q/q =0$ for  $u_{f,q}^I \le  u \le  0$ . Note that
\[
\begin{aligned}
\langle\nabla g(u,v),\widetilde X_q^{-}\rangle& |_{g(u,v)=0}   =
\frac{\sqrt{2}}q (1+q) (-1)^{q+1} u^{q-1} \Big(\sqrt{2} q + 3 (-1)^q  (1 + q)  u^{1 + q} \\
    &+
    \sqrt{2} q  u^{2(1 + q)}  - \sqrt{q} ( 1 + \sqrt{2}(-1)^q u^{1 + q}) \sqrt{
  2 q + 2 (-1)^q \sqrt{2} (1+q) u^{1 + q} +
   q u^{2(1 + q)}}\Big).
   \end{aligned}
\]
Of course on $u=0$ the above derivative is zero but, since it is a quadratic tangency, it is enough to show that this derivative does not vanish on $u_{f,q}^I < u < 0$. Proceeding as in $U_1$ to get rid of the square root, we have that
\[
p_{0}(u):=q (-1)^{q+1} \frac{\langle\nabla g(u,v),\widetilde X_q^{-}\rangle|_{g(u,v)=0}}{\sqrt{2} (q+1) u^{q-1}} =0
\]
implies
\[
\frac{2 u^{q+1} (-4 (-1)^q \sqrt{2} q - (9 + 10 q) u^{1 + q} +
 2 (-1)^{1+q} \sqrt{2} q u^{2(1 + q)})}{2 q^2 (1 +\sqrt{2}(-1)^q  u^{1 + q})^2}=0.
\]
Note that the denominator does not vanish on $ u_{f,3}^I < u < 0$, because the unique negative real solution of $1 +\sqrt{2}(-1)^q  u^{q+1}=0$, namely $u=-2^{-1/(2(1+q))},$ is outside the mentioned interval.
On the other hand, the unique two real solutions of the numerator are
\[
u_{\pm} =\left(\frac{18 + 20 q \pm 6 \sqrt{9 + 20 q + 4 q^2}}{8 \sqrt{2} q}\right)^{1/(1+q)}.
\]
We observe that $u_-$ belongs to the interval $ u_{f,q}^I < u < 0$ while $u_+$ does not. However, $u_-$ is not a solution of $p_{0}(u)=0$ because
\[
p_{0}(u_-) = -32 q (1 + 2 q) \left(1 + 648 q + 288 q^2 + 32 q^3 +
  ( 27 + 6q) \sqrt{2(1+2q)} \right) \ne 0.
\]
Therefore, $p_0(u)$ does not vanish for  $ u_{f,q}^I < u < 0$, which implies that the flow $\widetilde X_q^-$ points inwards $K^q$ on $U_5$. It concludes the proof of this proposition.
\end{proof}

\begin{proposition}\label{proofU2}
The vector field  $\widetilde X_q^{-}$ points inwards $K^q$ on $U_2$ provided that the polynomial $P_0$, defined in \eqref{polynomials}, does not vanish on  $I_0=\big(u_{f,q}^D\,,\,(1+4q)/(2q)\big)$.
\end{proposition}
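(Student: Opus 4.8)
The plan is to carry out for general $q$ the same analysis that, in the proof of Theorem~\ref{main}, treated the curves $R_2$, $S_2$, $T_2$. Parametrize $U_2$ by the zero set of $g(u,v):=v-qu+\frac{1+4q}{2}$, with $u$ ranging over $I_0=\big(u_{f,q}^D,(1+4q)/(2q)\big)=\big(2,(1+4q)/(2q)\big)$. Since $u>2>0$ on $I_0$, one has $\lvert u\rvert^q=u^q$ and $\lvert u\rvert^q/u=u^{q-1}$, so from \eqref{eq:vector} the branch $\widetilde X_q^-$ reads $u'=u^{q+1}-\sqrt{R(u)}$ and $v'=u^{q-1}\big(-uv-(q+1)(u^{q+1}-\sqrt{R(u)})^2\big)$ along $U_2$, where, after substituting $v=qu-\frac{1+4q}{2}$, the radicand becomes the polynomial $R(u)=u^{2(1+q)}+2qu^2-(1+4q)u+2=u^{2(1+q)}+2q(u-2)\big(u-\frac{1}{2q}\big)$. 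In particular $R(u)>0$ for $u>2$, so $\widetilde X_q^-$ is well defined along $U_2$ and $U_2\subset\{uv+1>0\}$.

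The key step is to compute $\langle\nabla g,\widetilde X_q^-\rangle$ on $U_2$. Since $\nabla g=(-q,1)$, we get $\langle\nabla g,\widetilde X_q^-\rangle|_{U_2}=-qu'+v'$; expanding $(u^{q+1}-\sqrt{R(u)})^2$ and separating the rational and irrational parts, this equals
\[
\langle\nabla g,\widetilde X_q^-\rangle\big|_{U_2}=A(u)+B(u)\sqrt{R(u)},
\]
where $B(u)=q+2(q+1)u^{2q}$ and
\[
A(u)=-2(q+1)u^{3q+1}-2q(q+2)u^{q+1}+\tfrac{(1+4q)(2q+3)}{2}\,u^q-2(q+1)u^{q-1}.
\]
(For $q=1,2,3$ this reproduces the expressions found on $R_2$, $S_2$, $T_2$.)

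Now argue by contradiction. If $\langle\nabla g,\widetilde X_q^-\rangle$ vanished at some $u^*\in I_0$, then $A(u^*)=-B(u^*)\sqrt{R(u^*)}$, hence $A(u^*)^2=B(u^*)^2R(u^*)$, so $u^*$ is a root of the polynomial $A(u)^2-B(u)^2R(u)$. Its leading terms, both of degree $6q+2$ with coefficient $4(q+1)^2$, cancel, and a direct expansion of the remaining coefficients in powers of $u$ yields the identity
\[
A(u)^2-B(u)^2R(u)=-\tfrac14\,P_0(u),
\]
with $P_0$ the polynomial from \eqref{polynomials}. Since $P_0$ has no root in $I_0$ by hypothesis, this is impossible; therefore $\langle\nabla g,\widetilde X_q^-\rangle$ is nowhere zero, hence of constant sign, on $I_0$. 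Evaluating at $u=u_{f,q}^D=2$, where $R(2)=2^{2(1+q)}$ and a short computation gives $A(2)+B(2)\,2^{q+1}=2^{q-1}>0$, identifies that sign as positive; since this is precisely the orientation under which $\widetilde X_q^-$ also points inwards $K^q$ along the adjoining arc $\Sigma_D^q$ (see Proposition~\ref{anlyitcproof} and Figure~\ref{trapping4}), we conclude that $\widetilde X_q^-$ points inwards $K^q$ at every point of $U_2$.

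The only non-mechanical ingredient is the polynomial identity $A^2-B^2R=-\tfrac14 P_0$; proving it is a matter of carefully matching the coefficients of $u^0,\dots,u^{6q+1}$ on both sides, exactly the computation that was carried out explicitly in the cases $q=1,2,3$ (producing there $p_0$, $p_6$, $p_{10}$, each a nonzero scalar multiple of $P_0$). The rest — positivity of $R$ on $I_0$, the sign evaluation at $u=2$, and the Poincar\'e--Bendixson argument already packaged into the statement — is routine.
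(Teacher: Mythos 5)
Your proposal is correct and follows essentially the same route as the paper: parametrize $U_2$ by $g(u,v)=v-qu+(1+4q)/2=0$, compute $\langle\nabla g,\widetilde X_q^-\rangle$ as a rational part plus a multiple of $\sqrt{R}$, and rationalize to reduce non-vanishing to the hypothesis that $P_0$ has no root in $I_0$ (your identity $A^2-B^2R=-\tfrac14P_0$ checks out, and your expression for the rational part agrees with the paper's explicit $q=2$ computation, up to an apparent sign typo in the paper's general formula). You in fact go slightly further than the paper by verifying the sign via the endpoint evaluation $A(2)+B(2)2^{q+1}=2^{q-1}>0$, a step the paper leaves implicit.
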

\begin{proof}
 Consider $U_2$ described by  $g(u,v):=v-q u +(1+4q)/2=0,$  for $u\in I_0$.  Denote $p_0(u)= \langle\nabla g(u,v),\widetilde X_q^{-}\rangle|_{g(u,v)=0}$, $u\in I_0$. Notice that
\[
\begin{aligned}
p_0(u) = & (q +2 (1+q) u^{2q})\sqrt{2 - u - 4 q u + 2 q u^2 + u^{2(1+q)}}  \\
&+ \frac 1 2  u^{q-1} (4(1+q)  - (3+2q) (1+4q) u + 4 q (2+q)u^2  +
   4 (1+q) u^{2(1+q)}).
\end{aligned}
\]
We claim that $p_0(u)$ does not vanish for $u\in I_0$. Indeed, if $p_0(u^*)=0$ at some $u^*\in I_0$, then, by proceeding as in $U_1$ to get rid of the square root, we obtain that $P_0(u^*)=0$, which contradicts the hypothesis. Hence, the flow of the vector field  $\widetilde X_q^{-}$ points inwards $K^q$ on $U_2$.
\end{proof}

\begin{proposition}\label{proofU3}
The vector field  $\widetilde X_q^{-}$ points inwards $K^q$ on $U_3$ provided that the polynomial $P^+ $, defined in \eqref{polynomials},  has at most one root (counting its multiplicity) in $I^+ =\big(0\,,\, u_{i,q}^D\big)$.
\end{proposition}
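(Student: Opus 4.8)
The plan is to repeat, for general $q$, the argument carried out for $R_3$, $S_3$, $T_3$ in the cases $q=1,2,3$. Write $U_3$ as the zero set of
\[
g(u,v)=v-\frac{(1+q)\sqrt2}{q}\,u^{q}+\kappa,\qquad
\kappa:=\frac{2^{\frac{1}{2(1+q)}}(1+q)^{\frac{2+q}{1+q}}}{q}>0,\qquad 0\le u<u_{i,q}^{D},
\]
so that $\nabla g=\big(-(1+q)\sqrt2\,u^{q-1},\,1\big)$. Substituting $v$ from $g=0$ into $\widetilde X_q^{-}$ (with $|u|=u$) and pulling out the positive factor $u^{q-1}$, one gets
\[
\langle\nabla g,\widetilde X_q^{-}\rangle\big|_{g=0}=u^{q-1}\,p(u),\qquad p(u)=A(u)+B(u)\sqrt{C(u)},
\]
where $C(u)=u^{2(1+q)}+2uv+2$ is the radicand of $\widetilde X_q^{-}$ restricted to $U_3$, $B(u)=(1+q)\big(\sqrt2+2u^{1+q}\big)$, and $A$ is a polynomial. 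Since $U_3\subset\{uv+1\ge0\}$ one has $C=u^{2(1+q)}+2(uv+1)>0$ on $U_3$, so $p$ is smooth and $B>0$ there. Exactly as in the three base cases, the factor $u^{q-1}$ absorbs the (quadratic) tangency of $\widetilde X_q^{-}$ with $U_3$ at the corner $u=0$, so it suffices to show that $p$ does not vanish on $I^+=(0,u_{i,q}^D)$ and has there the inward sign, which is the positive one.

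Next I would record the behaviour of $p$ at the two ends of $I^+$. At $u=0$, $C(0)=2$, so $p(0)=A(0)+B(0)\sqrt2=-2(1+q)+2(1+q)=0$, while a short Taylor expansion gives $p'(0)=(q+2)\kappa>0$; hence $u=0$ is a simple zero of $p$ and $p>0$ just to the right of it. At $u=u_{i,q}^D$ the curve $U_3$ joins $\Sigma_D^q$, i.e. $v=-1/u_{i,q}^D$, so $uv+1=0$, the radicand becomes the perfect square $C=u^{2(1+q)}$, $\sqrt C=u^{1+q}$, the first component of $\widetilde X_q^{-}$ vanishes, and one computes $p(u_{i,q}^D)=1>0$. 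Thus $p$ is positive at both ends of $I^+$; consequently, if $p$ vanished anywhere in $I^+$, it would vanish there at least twice counting multiplicity (a smooth function which is positive at both endpoints of an interval and has an interior zero either touches $0$ tangentially — a multiple zero — or changes sign and must change back).

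It then remains to exclude this using condition {\bf C2}. At a zero $u^{*}\in I^+$ of $p$ we have $A(u^{*})=-B(u^{*})\sqrt{C(u^{*})}$, hence $\big(A^2-B^2C\big)(u^{*})=0$; moreover, since $B>0$ and $C>0$ on $I^+$, the conjugate factor $A-B\sqrt C$ does not vanish at $u^{*}$, so $A^2-B^2C$ vanishes at $u^{*}$ to exactly the order of $p$. A routine (if lengthy) simplification — the analogue of the passages producing $p_3$, $p_8$, $p_{13}$ for $q=1,2,3$ — identifies $A^2-B^2C$ with $P^+$, up to a nonzero constant and the factor $u$ (which does not vanish on $I^+$), where $P^+$ is the polynomial in \eqref{polynomials}. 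Hence $P^+$ would have at least two roots in $I^+$ counting multiplicity, contradicting {\bf C2}. Therefore $p$ does not vanish on $I^+$; being positive near $0^+$ it is positive throughout $I^+$, i.e. $\langle\nabla g,\widetilde X_q^{-}\rangle>0$ on $U_3$, which is exactly the claim that $\widetilde X_q^{-}$ points inwards $K^q$ on $U_3$.

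The step that needs most care is not so much an "obstacle" — condition {\bf C2} is imposed precisely because the Sturm bound available for small $q$ is not — but rather the two pieces of bookkeeping on which the "one root forces two" contradiction rests: checking that the zero of $p$ at $u=0$ is genuinely simple (so that an interior zero of $p$ cannot be absorbed into the endpoint), and verifying that clearing the radical yields exactly the polynomial $P^+$ of \eqref{polynomials} together with the correct nonvanishing factor, so that root multiplicities pass faithfully from $p$ to $P^+$.
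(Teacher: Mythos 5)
Your proposal is correct and follows essentially the same route as the paper: compute $\langle\nabla g,\widetilde X_q^{-}\rangle=u^{q-1}(A+B\sqrt{C})$ on $U_3$, check positivity at both ends of $I^{+}$ (the first nonzero Taylor coefficient at $u=0$ and the value $1$ at $u_{i,q}^{D}$, both of which you compute correctly), deduce that an interior zero would have to count at least twice, and transfer that multiplicity to $P^{+}$ by rationalizing, contradicting condition \textbf{C2}. Your explicit remark that the conjugate factor $A-B\sqrt{C}$ does not vanish at a zero of $p$ (so multiplicities pass faithfully to $A^{2}-B^{2}C$) is a small point of rigor that the paper leaves implicit.
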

\begin{proof}

Consider $U_3$ described by $g(u,v)=v -(1+q) \sqrt{2} |u|^q/q -2^{\frac{1}{2(1+q)}} (1+q)^{\frac{2+q}{1+q}}/q=0$ for $0 \le u \le u_{i,q}^D$. Note  that
\[
\begin{aligned}
p_{1}(u): =& \langle\nabla g(u,v),\widetilde X_q^{-}\rangle|_{g(u,v)=0}  \\
=&\frac{1+q}{q} u^{q-1} \Big( \sqrt{q} (\sqrt{2} + 2 u^{1+q}) \sqrt{R}
 \\
  & - 2 q + 2^{\frac{1}{2(1+q)}} (1 + q)^{\frac{1}{1+q}} (3 + 2 q) u -
   3 \sqrt{2} (1 + q) u^{1+q} - 2 q u^{2(1+q)}
   \Big),
   \end{aligned}
   \]
where $R=2 q - 2^{\frac{3+2q}{2(1+q)}} (1+q)^{\frac{2+q}{1+q}} u +
     2 \sqrt{2} (1 + q) u^{1+q} + q u^{2(1+q)}$.

Of course on $u=0$ the above derivative is zero but since it is a quadratic tangency it is enough to show that $p_{1}(u)$ does not vanish  on $0 < u \le u_{i,q}^D$. We proceed by contradiction. Assume that $p_{1}(u)$ has a zero. Then, proceeding as in  $R_1$ to get rid of the square root, we have that $p_{1}(u)=0$ implies $P^+ (u)=0$.  Notice that
\[
p_{1}(0)=p_{1}'(0)= \cdots= p_{1}^{(q-1)}(0)=0, \quad  p_{1}^{(q)}(0)=\frac{2+q}{q} 2^{\frac{1}{2(1+q)}} (1+q)^{\frac{2+q}{1+q}} >0,
\]
and $ p_1(u_{i,q}^D) =(u_{i,q}^D)^{q-1} >0$. This implies that, if $p_{1}(u)$ has a zero in $(0,u_{i,q}^D)$, then it has another (or a multiple) zero on that interval. This would produce two zeros, counting their multiplicity, of $P^+ (u)$ in $(0,u_{i,q}^D)$, which contradicts the hypothesis.

Hence the flow of the vector field  $\widetilde X_q^{-}$ points inwards $K^q$ on $U_3$.
\end{proof}

\begin{proposition}\label{proofU4}
The vector field  $\widetilde X_q^{-}$ points inwards $K^q$ on $U_4$ provided that the polynomial $P^- $, defined in \eqref{polynomials}, has at most one root (counting its multiplicity) in $I^-:=(-2,0)$
\end{proposition}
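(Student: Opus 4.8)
The plan is to rerun on $U_4$ the argument used for $U_3$ in Proposition \ref{proofU3}, now on the negative branch. First I would parametrize $U_4$ by $g(u,v)=v-(1+q)\sqrt{2}\,|u|^q/q-2^{\frac{1}{2(1+q)}}(1+q)^{\frac{2+q}{1+q}}/q=0$ for $u_{i,q}^I<u\le0$, recalling that here $|u|^q=(-1)^q u^q$, and compute
\[
p_2(u):=\langle\nabla g(u,v),\widetilde X_q^{-}(u,v)\rangle|_{g(u,v)=0}.
\]
Exactly as in Proposition \ref{proofU3}, this comes out as $c\,u^{q-1}\big(A(u)+B(u)\sqrt{R(u)}\,\big)$ with $c\neq0$ and $A,B,R$ polynomials, where $R=q\big(u^{2(1+q)}+2uv+2\big)$ evaluated along $U_4$ agrees with the radicand of Proposition \ref{proofU3} except for a $(-1)^q$ on the term coming from $|u|^q$; this is precisely the discrepancy between $P^+$ and $P^-$ in \eqref{polynomials}. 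Since $p_2(0)=0$ but the tangency at $u=0$ is harmless (exactly as in the cases $q=1,2,3$: $U_4$ meets $U_3$ at $\big(0,-2^{\frac{1}{2(1+q)}}(1+q)^{\frac{2+q}{1+q}}/q\big)$, and the flow there does not pass from the interior of $K^q$ to its exterior), it suffices to prove that $p_2$ does not vanish on the open interval $(u_{i,q}^I,0)$.

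I would argue by contradiction. Suppose $p_2(u^*)=0$ for some $u^*\in(u_{i,q}^I,0)$; isolating $\sqrt{R}$ and squaring, as in the passage from $p_1$ to $P^+$ in Proposition \ref{proofU3}, yields $P^-(u^*)=0$. Next I would assemble the sign information at the two ends of the interval. At $u=0$, a computation parallel to that in Proposition \ref{proofU3} gives $p_2(0)=p_2'(0)=\cdots=p_2^{(q-1)}(0)=0$ and $p_2^{(q)}(0)=(-1)^q\,p_1^{(q)}(0)=(-1)^q\frac{2+q}{q}2^{\frac{1}{2(1+q)}}(1+q)^{\frac{2+q}{1+q}}$, which together with $u^q=(-1)^q|u|^q$ for $u<0$ shows that $p_2(u)>0$ for $u$ slightly negative (for $q=1,3$ this reproduces $p_2'(0)=-6\cdot2^{3/4}$ and $p_2'''(0)=-40\cdot2^{5/8}$). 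At $u=u_{i,q}^I$, since this endpoint lies on the hyperbola $uv+1=0$ (where $U_4$ meets $\Sigma_I^q$), the radicand collapses, $u^{2(1+q)}+2uv+2=u^{2(1+q)}$, so $\sqrt{R(u_{i,q}^I)}=\sqrt{q}\,|u_{i,q}^I|^{1+q}$ and $p_2(u_{i,q}^I)$ becomes an explicit algebraic number; I would show $p_2(u_{i,q}^I)>0$ (for $q=1,3$ these equal $38.58\ldots$ and $40.31\ldots$).

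Granting these two facts, $p_2$ is positive at both ends of $(u_{i,q}^I,0)$, so if it vanished somewhere inside it would either vanish to even order there, or change sign and change back; in either case $p_2$, and hence $P^-$, would have at least two zeros counted with multiplicity in $(u_{i,q}^I,0)$. Since the preceding lemma gives $x_{i,q}^I<2$, we have $u_{i,q}^I=-x_{i,q}^I>-2$, so $(u_{i,q}^I,0)\subset I^-=(-2,0)$, contradicting \textbf{C3}. Hence $p_2\neq0$ on $(u_{i,q}^I,0)$, and being positive near $u=0^-$ it is positive throughout, i.e. $\widetilde X_q^{-}$ points inwards $K^q$ on $U_4$. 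I expect the one genuinely delicate point to be the closed-form positivity $p_2(u_{i,q}^I)>0$ valid for all $q$: in the three worked cases it was only checked numerically, so one needs either a clean reduction using $P(x_{i,q}^I)=0$ — that is, $\sqrt{2}(1+q)(x_{i,q}^I)^{1+q}=q+2^{\frac{1}{2(1+q)}}(1+q)^{\frac{2+q}{1+q}}x_{i,q}^I$ — to clear the radical and the algebraic dependence on $x_{i,q}^I$, or a monotonicity-in-$q$ estimate; the vanishing-order bookkeeping at $u=0$ and the squaring step are routine.
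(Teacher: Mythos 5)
Your overall strategy is the paper's: compute $p_2=\langle\nabla g,\widetilde X_q^-\rangle|_{g=0}$ on $U_4$, dismiss the tangency at $u=0$, argue by contradiction that a zero of $p_2$ forces a zero of $P^-$ after clearing the radical, and then use sign information at the two ends of an interval to manufacture two zeros (counted with multiplicity) of $P^-$, contradicting \textbf{C3}. Your treatment of the behaviour near $u=0^-$ is in fact more careful than the paper's: keeping the factor $(-1)^q$ and using $u^q=(-1)^q|u|^q$ for $u<0$ gives $p_2>0$ just to the left of the origin for every parity of $q$, which is the statement actually needed.

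The genuine gap is the one you flag yourself: the positivity of $p_2$ at the left end of your interval. You choose to work on $(u_{i,q}^I,0)$ and therefore need $p_2(u_{i,q}^I)>0$, where $u_{i,q}^I=-x_{i,q}^I$ is only defined implicitly as a root of $P(x)$; you offer two possible routes (eliminating the radical via $P(x_{i,q}^I)=0$, or a monotonicity-in-$q$ estimate) but carry out neither, so the proof does not close. The paper avoids this entirely by evaluating at the \emph{explicit} point $u=-2$ and running the counting argument on the whole interval $(-2,0)$: since $[u_{i,q}^I,0)\subset(-2,0)$ (by the lemma you already invoke), a zero of $p_2$ on $U_4$ together with $p_2(-2)>0$ and $p_2>0$ near $0^-$ yields two zeros of $P^-$ in $(-2,0)$. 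This is precisely why condition \textbf{C3} is stated on $I^-=(-2,0)$ rather than on $(u_{i,q}^I,0)$ — the hypothesis interval is tailored to the tractable endpoint. You noticed the inclusion $(u_{i,q}^I,0)\subset(-2,0)$ but used it only to transport the contradiction into $I^-$; using $-2$ itself as the evaluation point replaces the ``genuinely delicate'' closed-form positivity at an algebraic point by the single explicit inequality $p_2(-2)>0$ in the parameter $q$, which is what the paper checks (and which your argument should be rewired to use).
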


\begin{proof}

Consider $U_4$ described by $g(u,v)=v -(1+q) \sqrt{2} |u|^q/q -2^{\frac{1}{2(1+q)}} (1+q)^{\frac{2+q}{1+q}}/q=0$ for $u<0$. Notice that
\[
\begin{aligned}
p_{2}(u): =& \langle\nabla g(u,v),\widetilde X_q^{-}\rangle|_{g(u,v)=0}\\
 =&
\frac{1+q}{q} u^{q-1} \Big(\sqrt{q} ((-1)^q\sqrt{2}  + 2 u^{1+q})\sqrt{R} \\
&  -2 q (-1)^{q} + (-1)^{q}2^{\frac{1}{2(1+q)}} (1 + q)^{\frac{1}{1+q}} (3 + 2 q) u -
 3 \sqrt{2} (1 + q) u^{1+q} - 2 (-1)^{q} q u^{2(1+q)} \Big),
\end{aligned}
\]
where $R=2 q - 2^{\frac{3+2q}{2(1+q)}} (1+q)^{\frac{2+q}{1+q}} u +(-1)^q
     2 \sqrt{2} (1 + q) u^{1+q} + q u^{2(1+q)}$.

Of course on $u=0$ the above derivative is zero but since it is a quadratic tangency it is enough to show that $p_{2}(u)$ does not vanish on $u_{i,q}^I \le u  <0$. Analogously to Proposition \ref{proofU3}, we proceed by contradiction. Assume that $p_{2}(u)$ has a zero. Then, proceeding as in $U_1$ to get rid of the square root, we have that $p_{2}(u)=0$ implies $P^- (u)=0$. Notice that
\[
p_{2}(0)=p_{2}'(0)= \cdots= p_{2}^{(q-1)}(0)=0, \quad\text{and}\quad  p_{2}^{(q)}(0)=-\frac{2+q}{q} 2^{\frac{1}{2(1+q)}} (1+q)^{\frac{2+q}{1+q}} <0.
\]
In addition, one can see tha $p_2(-2)>0$.
This implies that if $p_{2}(u)$ has a zero in $[u_{i,q}^I,0)\subset(-2,0)$, then it has another (or a multiple) zero in the interval $(-2,0)$. From here, we get a contradiction with the hypothesis.

Hence the flow of the vector field  $\widetilde X_q^{-}$ points inwards $K^q$ on $U_4$.
\end{proof}

Finally, consider the map $h_q \colon \Sigma_D^q \to \R$ defined by $h_q(p)=\pi (P_q(p)+p)$, where $\pi:\R^2\to\R$ is the projection onto the first coordinate. Note that by the continuous dependence of the flow of $\widetilde X_q^-$ with respect to the initial conditions the map $h_q$ is continuous. Moreover, the image of the point $(u_{f,q}^D,-1/u_{f,q}^D)$ by $P_q$ is inside $\Sigma_I^q$, and  its symmetric is below its image because $u_{f,q}^D + u_{i,q}^I>0$. Hence $h_q(u_{f,q}^D,-1/u_{f,q}^D)>0$. On the other hand, the image of the point $(u_{i,q}^D,-1/u_{i,q}^D)$ by $P_q$ is inside $\Sigma_I^q$, and  its symmetric is above its image because  $u_{i,q}^D +u_{f,q}^I<0$. Therefore $h_q(u_{i,3}^D,-1/u_{i,3}^D)<0$. Thus, by continuity, there exists $p_q^*\in\Sigma_D^q$ such that $h_q(p_q^*)=0$, i.e, $P_q(p_q^*)=-p_q^*$ and so $ \phi_q^-(t^*_q,p_q^*)=-p_q^*$ as we wanted to prove (see Section \ref{sec:setup}).

\section*{Appendix: Sturm procedure for $q\in\{1,2,3\}$}

Let $p(u)$ be a square-free polynomial of degree $d$, and consider the so-called {\it Sturm sequence} $q_i(u),$ for $i=0,\ldots,\ell,$ given by: $q_0(u)=p_0(u)$, $q_1(u)=p'_0(u),$ and for $i=2,\ldots,\ell$, $-q_i(u)$ is the polynomial reminder of the division of  $q_{i-2}$ by $q_{i-1}$, where $\ell$ is the first index for which $q_{\ell}$ is constant. Sturm Theorem (see, for instance, \cite[Theorem 5.6.2]{SB}) provides that the number of real roots of $p(x)$ in the half-open interval $(a,b]$ is $V(a)-V(b)$, where $V(x)$ is the number of sign variation of the sequence $q_0(x),q_1(x)\ldots,q_{\ell}(x)$. Notice that when computing $V(x)$, $x$ can take the value $\infty$ (resp. $-\infty$), in these cases $V(\infty)$ (resp. $V(-\infty)$) denotes the number of sign variation of the leading terms of the sequence $q_0(x),q_1(x)\ldots,q_{\ell}(x)$ (resp. $q_0(-x),q_1(-x)\ldots,q_{\ell}(-x)$).

\bigskip

\subsection{ Sturm Procedure for $q=1$.}

First, note that the polynomial
\[
p_0(u)=-14 + 95 u - \frac{745}4 u^2+ 110 u^3 - 19 u^4 + 20 u^5 - 8 u^6
\]
and its derivative
\[
p_0'(u)=-95 - \frac{745}2 u+ 330 u^2 - 76 u^3 + 100 u^4 - 48 u^5
\]
have no common zeroes because its resultant with respect to $u$ is $-1.52127..\cdot 10^{18}.$ In particular, $p(x)$ is a square-free polynomial and the Sturm procedure can be applied directly. By computing the Sturm sequence $q_i(u)$ for $i=0,\ldots,6,$ we see $V(2)=V(5/2)=2$, which implies that $p_0(u)$ has no real zero in the interval $(2,5/2)$.

Now, notice that the polynomial
\[
p_3(u) =12 \cdot 2^{3/4} - 58 \sqrt{2} u + 88 \cdot 2^{1/4} u^2 - 38 u^3 + 4 \cdot 2^{3/4} u^4 - 4 \sqrt{2}  u^5
\]
has no common zeroes with its derivative because the resultant  between $p_3(u)$ and $p_3'(u)$ with respect to the variable $u$ is $-5637568724992\sqrt{2}$ which is not zero. In particular, $p_3(u)$ is a square-free polynomial and the Sturm procedure can be applied directly. By computing the Sturm sequence $q_i(u)$ for $i=0,\ldots,5,$ we see that $V(0)=3$ and $V(\infty)= 2$. Therefore it follows from the Sturm process that $p_3(u)$ has a unique positive real zero.

Finally, the polynomial
\[
p_4(u) =12 \cdot 2^{3/4} - 42 \sqrt{2} u - 88 \cdot 2^{1/4} u^2 - 38 u^3 + 4 \cdot 2^{3/4} u^4 +  4 \sqrt{2}  u^5
\]
has no common zeroes with its derivative because the resultant  between $p_4(u)$ and $p_4'(u)$ with respect to the variable $u$ is
$-88414837800960\sqrt{2}$ which is not zero. In particular, $p_4(u)$ is a square-free polynomial and the Sturm procedure can be applied directly. By computing the Sturm sequence $q_i(u)$ for $i=0,\ldots,5,$ we see that $V(-\infty)=4$ and $V(0)= 3$. Therefore, it follows from the Sturm process that $p_4(u)$ has a unique negative real zero.

\bigskip

\subsection{ Sturm Procedure for $q=2$}

First, note that the polynomial
\[
p_6(u)=32 - 144 u - 80 u^2 + 1512 u^3 - 4545 u^4 + 3168 u^5 - 624 u^6 + 216 u^9 - 96 u^{10}
\]
has no common zeroes with its derivative because the resultant between $p_6(u)$ and $p_6'(u)$ with respect to $u$ is $-4.41356.. \cdot 10^{57}.$ In particular, $p_6(u)$ is a square-free polynomial and the Sturm procedure can be applied directly. By computing the Sturm sequence $q_i(u)$ for $i=0,\ldots,10,$ we see $V(2)=V(9/4)=3$, which implies that $p_6(u)$ has no real zero in the interval $(2,9/4)$.

Now, notice that the polynomial
\[
p_8(u) =32 \cdot  2^{1/6} 3^{1/3} - 49 \cdot 2^{1/3}  3^{2/3} u - 16 \sqrt{2} u^2 + 78 \cdot 2^{2/3}   3^{1/3} u^3 - 58 u^5 + 8 \cdot 2^{1/6}  3^{1/3} u^6 - 8 \sqrt{2} u^8
\]
has no common zeroes with its derivative because the resultant  between $p_8(u)$ and $p_8'(u)$ with respect to the variable $u$ is
\[
9669300766922659513289932800 \cdot 2^{1/6}  3^{1/3}
\]
which is not zero. In particular, $p_8(u)$ is a square-free polynomial and the Sturm procedure can be applied directly. By computing the Sturm sequence $q_i(u)$ for $i=0,\ldots,10,$ we see that $V(-\infty)= 5$, $V(0)=4$, and $V(\infty)=3$. Therefore, it follows from the Sturm process that $p_8(u)$ has a unique positive real zero and a unique negative real zero.

\bigskip

\subsection{  Sturm Procedure for $q=3$.}
First, note that the polynomial
\[
p_{10}(u) := 72 - 468 u + 216 u^2 - 256 u^4 + 3744 u^5 - 15225 u^6 + 11544 u^7 - 2412 u^8 + 416 u^{13} - 192 u^{14}.
\]
has no common zeroes with its derivative because the resultant between $p_{10}(u)$ and $p_{10}'(u)$ with respect to $u$ is $-2.74875.. \cdot 10^{97}.$ In particular, $p_{10}(u)$ is a square-free polynomial and the Sturm procedure can be applied directly. By computing the Sturm sequence $q_i(u)$ for $i=0,\ldots,10,$ we see $V(2)=V(13/6)=3$, which implies that $p_{10}(u)$ has no real zero in the interval $(2,13/6)$.

Now, notice that the polynomial
\[
p_{13}(u) =20 \cdot 2^{5/8} - 54 \cdot 2^{1/4} u - 8 \sqrt{2} u^3 + 80 \cdot 2^{1/8} u^4 - 26 u^7 + 4 \cdot 2^{5/8}  u^8 - 4 \sqrt{2} u^{11}
\]
has no common zeroes with its derivative because the resultant  between $p_{13}(u)$ and $p_{13}'(u)$ with respect to the variable $u$ is
$-5.12026.. \cdot 10^{35}$ which is not zero. In particular, $p_{13}(u)$ is a square-free polynomial and the Sturm procedure can be applied directly. By computing the Sturm sequence $q_i(u)$ for $i=0,\ldots,11,$ we see that $V(0)=5$ and $V(\infty)= 4$. Therefore it follows from the Sturm process that $p_{13}(u)$ has a unique positive real zero.

Finally, the polynomial
\[
p_{14}(u) =20 \cdot 2^{5/8} - 54 \cdot 2^{1/4} u + 8 \sqrt{2} u^3 - 80 \cdot 2^{1/8} u^4 - 26 u^7 + 4 \cdot 2^{5/8} u^8 + 4 \sqrt{2} u^{11}
\]
has no common zeroes with its derivative because the resultant between $p_{14}(u)$ and $p_{14}'(u)$ with respect to the variable $u$ is
$-2.29037.. \cdot 10^{37}$ which is not zero. In particular, $p_{14}(u)$ is a square-free polynomial and the Sturm procedure can be applied directly. By computing the Sturm sequence $q_i(u)$ for $i=0,\ldots,11,$ we see that $V(-\infty)=7$ and $V(0)= 6$. Therefore, it follows from the Sturm process that $p_{14}(u)$ has a unique negative real zero.

\section{Supplementary Material}

See the supplementary material for a Mathematica algorithm (based on the Sturm procedure explained in the Appendix) that, for a given arbitrary positive integer $q$, checks whether conditions {\bf C1}, {\bf C2}, and {\bf C3} of Theorem \ref{main-general} hold or not  by computing the number of roots of the polynomials $P_0$, $P^+$, and $P^-$ in the intervals $I_0$, $I^+$, and $I^-$, respectively.

\section*{Acknowledgments}
J. Llibre is supported by the Agencia Estatal de Investigaci\'on grant PID2019-104658GB-I00, and the H2020 European Research Council grant MSCA-RISE-2017-777911.
D.D. Novaes is partially supported by S\~{a}o Paulo Research Foundation (FAPESP) grants 2022/09633-5, 2021/10606-0, 2019/10269-3, and 2018/13481-0, and by Conselho Nacional de Desenvolvimento Cient\'{i}fico e Tecnol\'{o}gico (CNPq) grants 438975/2018-9 and 309110/2021-1.
C. Valls is supported through CAMGSD, IST-ID, projects UIDB/04459/2020 and UIDP/04459/2020.

\section*{Data availability statement} Data sharing is not applicable to this article as no new data were created or analyzed in this study.

\end{document}